
\documentclass[final,leqno,onefignum,onetabnum]{siamltex1213}
\usepackage{graphicx}
\usepackage{amssymb}
\usepackage{amsmath}
\usepackage{subfigure}
\usepackage{float}

\title{Existence and stability in a virtual interpolation method of the Stokes equations}

\author{
Seong-Kwan Park \footnotemark[1]\ \footnotemark[2]
\and Gahyung Jo\footnotemark[1]\ \footnotemark[3]
\and Hi Jun Choe \footnotemark[1]
}

\begin{document}
\maketitle
\renewcommand{\thefootnote}{\fnsymbol{footnote}}

\footnotetext[1]{Department of Mathematics, Yonsei University, Seoul, 120-749, Republic of Korea.}
\footnotetext[2]{Department of Turbulent Boundary Layer, PARK Seong-Kwan Institute, Seoul 136-858, Republic of Korea.}
\footnotetext[3]{National Fusion Research Institute, Daejeon 169-148, Republic of Korea.}

\renewcommand{\thefootnote}{\arabic{footnote}}

\newtheorem{thm}{Theorem}[section]
\newtheorem{prop}[thm]{Proposition}
\newtheorem{lem}[thm]{Lemma}
\newtheorem{cor}[thm]{Corollary}
\newtheorem{example}[thm]{Example}
\newtheorem{conj}[thm]{Conjecture}
\newtheorem{note}[thm]{Note}
\newtheorem{remark}[thm]{Remark}

\slugger{mms}{xxxx}{xx}{x}{x--x}

\begin{abstract}
In this paper, we propose a new virtual interpolation point method
to formulate the discrete Stokes equations.
We form virtual staggered structure for the velocity and pressure
from the actual computation node set. The virtual interpolation
point method by a point collocation scheme is well suited to
meshfree scheme since the approximation comes from smooth kernel and
we can differentiate directly the kernels. The focus of this paper
is laid on the contribution to a stable flow computation without
explicit structure of staggered grid. In our method, we don't have
to construct explicitly the staggered grid at all. Instead, there
exists only virtual interpolation points at each computational node
which play a key role in discretizing the conservative quantities of
the Stokes equations.

We prove the inf-sup condition for virtual interpolation point
method with virtual structure of staggered grid and the existence
and stability of discrete solutions.
\end{abstract}

\begin{keywords}
\end{keywords}

\begin{AMS}
\end{AMS}

\pagestyle{myheadings}
\thispagestyle{plain}

\section{Introduction}
Despite the fact that there have been lots of schemes to solve flow
problems, for example, the incompressible Navier-Stokes flow, the
Euler flow which is compressible or incompressible, and the
compressible Navier-Stokes,  the issues on the stability, the
efficiency and the accuracy take place frequently as the complexity
of the problem increases. The finite difference method which has
long history uses the staggered grids for the velocity and pressure
for the purpose of avoiding the stability issue.

We are concerned with existence and stability issues for the
numerical approximation of the stationary incompressible Stokes
equation by virtual interpolation point(VIP) method derived from
meshfree scheme. For the finite element, there are extensive works
for inf-sup stability like Babuska\cite{babuska1973},
Brezzi\cite{brezzi1974} and Girault and Raviart\cite{girault1986}.
We form virtual interpolation point grid for the velocity and
pressure to exploit the inf-sup stability of staggered structure and
then from the interpolation using collocation we prove the existence
of discrete solution. we think our idea combining the virtual
staggered structure and interpolation is very powerful to solve many
difficult fluid problems.

The meshfree scheme has been successfully applied to various
problems in fluid as shown in Choe \textit{et al.}~\cite{choe2006b},
Park \textit{et al.}~\cite{park2007}, and Park~\cite{park2010}. One
of the significant features of meshfree scheme is the versatile
property of reproducing kernel like complete local generation of
polynomials. In this paper we adopt point collocation method to
formulate the discrete Stokes equations. The point collocation
method is well suited to meshfree scheme since the approximation
comes from smooth kernel and we can differentiate directly the
kernels. For more details of basis function (shape function),
$\Psi$, we refer Liu \textit{et al.}~\cite{liu1997}. We include
several numerical results to confirm our theory.

For simplicity, we consider two dimensional stationary Stokes problem with periodic boundary condition,
    \begin{align}
        \label{stokes}
        -\Delta \mathbf{u} + \nabla p &=  \mathbf{f},    \\
        \nabla\cdot \mathbf{u} &= 0,\nonumber
    \end{align}
in the unit square domain $\Omega$, where $\mathbf{u}$ is velocity,
$p$ is pressure and $\mathbf{f}$ is external force. From
Helmholtz-Weyl decomposition, when $\mathbf{f}\in L^2(\Omega)$, we
have that $\mathbf{f} = \nabla a + \mathbf{d}$, $\mbox{div}
\mathbf{d} =0$ weakly in $L^2$. Therefore, by merging $\nabla a$ to
pressure, we can assume $\mathbf{f}$ is solenoidal in
(\ref{stokes}). Furthermore taking divergence we may assume the
pressure $p$ is harmonic in (\ref{stokes}) although we do not need
harmonicity in formulation, namely,

\begin{equation*}
	\Delta p = 0.
\end{equation*}

Let $X = H^1_{per}(\Omega) = \lbrace \mathbf{u} : \text{$\mathbf{u}$
is periodic,}\int_\Omega \mathbf{u} d\mathbf{x} = 0 \text{ and }
\|\mathbf{u}\|_X^2  =\int_\Omega |\nabla \mathbf{u} |^2 d\mathbf{x} <
\infty \rbrace$ and $M = L^2_{per}(\Omega) = \lbrace q : \text{$q$
is periodic and } \int_\Omega |q|^2  d\mathbf{x} < \infty\rbrace$.
By the saddle point argument for the function space $X \times M$, the
existence of the solution to the Stokes equations follows from the
inf-sup condition as long as
 $\mathbf{f}\in H^{-1}_{per}(\Omega)$.
\begin{definition}
    $X\times M$ satisfies inf-sup condition for a bilinear form $b$
     if there is a positive constant $\mu > 0$ such that
\begin{equation*}
    \inf_{p\in M\backslash \{0\}} \sup_{\mathbf{u}\in X} \frac{b(\mathbf{u}, p)}{\|\mathbf{u}\|_X \|p\|_M} \ge \mu > 0.
\end{equation*}
\end{definition}

\begin{thm}
Suppose that  $X\times M$ satisfies inf-sup condition for a bilinear
form $b$. Given $\mathbf{f} \in X^\prime$, there is a pair
$(\mathbf{u},p) \in X \times M$ such that
\begin{align*}
    a(\mathbf{u}, \mathbf{v}) + b(\mathbf{v} ,p) &= \langle \mathbf{f}, \mathbf{v} \rangle~~~\forall \mathbf{v} \in X, \\
    b(\mathbf{u}, q) &= 0 ~~~\forall q \in M,
\end{align*}
where $a(\mathbf{u}, \mathbf{v})=\int_\Omega \nabla\mathbf{u}\cdot
\nabla\mathbf{v} d\mathbf{x}$ and $b(\mathbf{u}, q)=\int_\Omega
\mathrm{div}\mathbf{u}q d\mathbf{x}$. Moreover $(\mathbf{u},p)$ satisfies
$$
\|\mathbf{u}\|_{X} +\|p\|_M \leq C \|\mathbf{f}\|_{X^{\prime}}.
$$
for a constant $C>0$.

\end{thm}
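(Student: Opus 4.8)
The plan is to follow the classical saddle-point (Brezzi) argument, exploiting the fact that here $a$ is coercive on the \emph{entire} space $X$, since $a(\mathbf{u},\mathbf{u}) = \|\mathbf{u}\|_X^2$. First I would introduce the kernel of the constraint,
$$
Z = \{\mathbf{v}\in X : b(\mathbf{v},q)=0 \text{ for all } q\in M\},
$$
which is the space of divergence-free fields in $X$; as the kernel of the bounded operator associated with $b$, it is a closed subspace of $X$, hence itself a Hilbert space. On $Z$ the form $a$ remains bounded and coercive, so by Lax--Milgram there is a unique $\mathbf{u}\in Z$ solving the reduced problem $a(\mathbf{u},\mathbf{v}) = \langle\mathbf{f},\mathbf{v}\rangle$ for all $\mathbf{v}\in Z$, and coercivity with constant one gives $\|\mathbf{u}\|_X \le \|\mathbf{f}\|_{X'}$. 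Since $\mathbf{u}\in Z$, the second equation $b(\mathbf{u},q)=0$ for all $q\in M$ holds automatically.

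The second step is to recover the pressure. Consider the functional $\ell(\mathbf{v}) := \langle\mathbf{f},\mathbf{v}\rangle - a(\mathbf{u},\mathbf{v})$ on $X$; by the reduced problem it annihilates $Z$, i.e.\ $\ell\in Z^{\circ}$, the polar of $Z$ in $X'$. The decisive point is to show that the map $B':M\to X'$ defined by $\langle B'q,\mathbf{v}\rangle = b(\mathbf{v},q)$ is an isomorphism from $M$ onto $Z^{\circ}$. The inf-sup condition says precisely that $B'$ is bounded below, $\|B'q\|_{X'}\ge \mu\|q\|_M$, so $B'$ is injective with closed range. A density / closed-range (Banach--Ne\v{c}as) argument then identifies this closed range with the full annihilator $Z^{\circ}$. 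Applying the inverse to $\ell$ produces a unique $p\in M$ with $b(\mathbf{v},p)=\ell(\mathbf{v})=\langle\mathbf{f},\mathbf{v}\rangle - a(\mathbf{u},\mathbf{v})$ for all $\mathbf{v}\in X$, which is exactly the first equation.

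Finally the stability estimate follows by combining the two steps. The bound $\|\mathbf{u}\|_X\le\|\mathbf{f}\|_{X'}$ is already in hand; for the pressure I would feed $p$ into the inf-sup inequality,
$$
\mu\|p\|_M \le \sup_{\mathbf{v}\in X\setminus\{0\}} \frac{b(\mathbf{v},p)}{\|\mathbf{v}\|_X} = \sup_{\mathbf{v}\in X\setminus\{0\}}\frac{\langle\mathbf{f},\mathbf{v}\rangle - a(\mathbf{u},\mathbf{v})}{\|\mathbf{v}\|_X} \le \|\mathbf{f}\|_{X'} + \|\mathbf{u}\|_X \le 2\|\mathbf{f}\|_{X'},
$$
using boundedness of $a$ with norm one. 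Hence $\|\mathbf{u}\|_X + \|p\|_M \le (1 + 2/\mu)\|\mathbf{f}\|_{X'}$, giving the claim with $C = 1 + 2/\mu$. The main obstacle is the surjectivity of $B'$ onto $Z^{\circ}$ in the pressure-recovery step: translating the inf-sup lower bound into closedness of the range and then into the equality $\mathrm{range}(B')=Z^{\circ}$ is where the closed-range theorem does the real work, whereas the coercivity of $a$ on all of $X$ makes the velocity step and the a priori bounds essentially routine.
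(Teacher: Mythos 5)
Your proposal is correct and is precisely the classical Babu\v{s}ka--Brezzi saddle-point argument (Lax--Milgram on the divergence-free kernel $Z$, then pressure recovery from $Z^{\circ}$ via the closed-range theorem, then the inf-sup bound for $\|p\|_M$) that the paper itself invokes without proof, simply citing the saddle-point theory of Babu\v{s}ka, Brezzi, and Girault--Raviart. Since the paper offers no independent proof, your write-up is in effect the standard argument the authors have in mind, correctly executed with the explicit constant $C = 1 + 2/\mu$.
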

We discretize the incompressible Stokes equations by meshfree
scheme. Then by the inf-sup condition for discrete version in
Theorem \ref{theorem:inf-sup_for_vip}, we prove existence and
stability of VIP method. The most important contribution in this
paper is the single node scheme for both velocity and pressure by
VIP method. As a natural consequence, the computation becomes very
efficient and stable and is very robust to geometrical complexity.
Although the approximation node set may not have any structural
condition, the numerical stability follows from the facts that VIP
method compromise the usual staggered grid and that any discrete
vector can be reproduced by meshfree scheme. Since the collocation
method requires the pointwise evaluation of the second derivatives
at each node, we need higher regularity on the external force
$\mathbf{f} \in C^\alpha$ to get approximation error. Theorem
\ref{theorem:inf-sup_for_vip} and \ref{theorem:stability} are our
main theorems for existence and stability.

To validate VIP method, we conduct several numerical simulations.


\section{Formulation of VIP method}
First we introduce the meshfree method in view of moving least square by general setting and then consider the periodic domain.
We let $\Omega\subset \mathbb{R}^n$ and $u$ be a bounded $C^\infty$ function.
We consider the set of polynomials of degree less than $m$
    \begin{equation}    \label{equation:polynomial}
        P_m = \{ x_1^{\alpha_1}\cdots x_n^{\alpha_n}: |\alpha
        |=\alpha_1+\cdots+\alpha_n \leq m \},
    \end{equation}
and introduce window function $\Phi$ a nonnegative smooth function with compact support.
By minimizing the local error residual function
	\begin{equation*}
		J(\mathbf{a}(\bar{\mathbf{x}}))= \int_\Omega \left|u(\mathbf{x}) -
		P_m\left(\frac{\mathbf{x}-\bar{\mathbf{x}}}{\rho}\right)\cdot {\bf
		a}(\bar{\mathbf{x}})\right|^2
		\frac{1}{\rho^n}\Phi\left(\frac{\mathbf{x}-\bar{\mathbf{x}}}{\rho}\right)
		d\mathbf{x},
	\end{equation*}
for a positive dilation parameter $\rho$ and setting
$\bar{\mathbf{x}} =\mathbf{x}$, we obtain the continuous projection
$Ku$ of $u$
	\begin{equation}
		\label{globalapproximation}
		 Ku(\mathbf{x}) = \int k_\rho(\mathbf{x}-\mathbf{y},\mathbf{x}) u(\mathbf{y}) d\mathbf{y},
		\end{equation}
by a reproducing kernel $k_\rho$ (see equation (3) in \cite{choe2006a}).
We note that in periodic domain the kernel function $k_\rho(\mathbf{z},\mathbf{x})$ is independent of $\mathbf{x}$ and $Ku$ is the usual convolution of $k_\rho$ and $u$.
The key merit of meshfree scheme is the reproducing property of polynomials of degree $m$. For a more detail, we refer \cite{liu1996}.
Furthermore there is a mathematical theorem interpreting the interpolation errors and numerical convergence.
	\begin{thm}[see~\cite{choe2006a}]
		\label{theorem:diff_interpolation}
		Suppose the boundary of $\Omega$ is smooth and $\mathrm{supp}
		k_\rho \cap \overline{\Omega}$ is convex.
		If $m$ and $p$ satisfy
		\begin{equation*}
			m >\frac{n}{p} - 1,
		\end{equation*}
then the following interpolation estimate of the projection holds
		\begin{equation*}
        	\| D^\beta v - D^{\beta} Kv \|_{L^p(\Omega)} \leq C(m)\rho^{m+1-|\beta|}
        	\|v\|_{W^{m+1,p}(\Omega)},
		\end{equation*}
for all $0 \leq |\beta| \leq m$.
\end{thm}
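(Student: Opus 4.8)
The plan rests on the single structural property emphasized just before the statement: the reproducing kernel $k_\rho$ reproduces every polynomial of degree at most $m$, that is, $Kq = q$ for all $q \in P_m$. Combined with Taylor's theorem this collapses the interpolation error onto $k_\rho$ tested against a Taylor remainder of order $m+1$, whose size is controlled by the $\rho$-sized support of the window function $\Phi$. So the whole estimate is driven by (i) exact cancellation of the polynomial part and (ii) the scaling $k_\rho(\mathbf{z},\mathbf{x}) = \rho^{-n}\kappa(\mathbf{z}/\rho,\mathbf{x})$ with $\kappa$ a fixed profile supported in $\{|\mathbf{z}| \le C\}$.

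First I would fix $\mathbf{x} \in \Omega$ and expand $v$ in its Taylor polynomial of degree $m$ about $\mathbf{x}$, $v(\mathbf{y}) = T_m v(\mathbf{y};\mathbf{x}) + R_m(\mathbf{y};\mathbf{x})$, with integral remainder
\[
R_m(\mathbf{y};\mathbf{x}) = (m+1)\sum_{|\alpha|=m+1}\frac{(\mathbf{y}-\mathbf{x})^\alpha}{\alpha!}\int_0^1 (1-t)^m D^\alpha v\bigl(\mathbf{x}+t(\mathbf{y}-\mathbf{x})\bigr)\,dt.
\]
Since $\mathbf{y}\mapsto T_m v(\mathbf{y};\mathbf{x})$ is a polynomial of degree $\le m$, the reproducing property gives $\int k_\rho(\mathbf{x}-\mathbf{y},\mathbf{x})\,T_m v(\mathbf{y};\mathbf{x})\,d\mathbf{y} = T_m v(\mathbf{x};\mathbf{x}) = v(\mathbf{x})$, so the polynomial part cancels exactly and
\[
v(\mathbf{x}) - Kv(\mathbf{x}) = -\int k_\rho(\mathbf{x}-\mathbf{y},\mathbf{x})\,R_m(\mathbf{y};\mathbf{x})\,d\mathbf{y}.
\]
Here the convexity hypothesis on $\mathrm{supp}\,k_\rho \cap \overline{\Omega}$ is exactly what guarantees that the segment $\mathbf{x}+t(\mathbf{y}-\mathbf{x})$ stays in $\Omega$ throughout the support, so that $R_m$ is legitimately written through interior values of $D^{m+1}v$.

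Next I would apply $D^\beta$ and track the powers of $\rho$ by the Leibniz rule. On the support one has $|(\mathbf{y}-\mathbf{x})^\alpha| \le C\rho^{m+1}$ for $|\alpha|=m+1$, which supplies $\rho^{m+1}$; each derivative that lands on the $\rho$-scaled kernel costs one factor $\rho^{-1}$, and each derivative that lands on the remainder lowers the order of a monomial $(\mathbf{y}-\mathbf{x})^\alpha$ by one, again costing $\rho^{-1}$ of the available $\rho^{m+1}$. Hence every term produced by distributing $D^\beta$ carries precisely the net order $\rho^{m+1-|\beta|}$. In the periodic setting this is cleanest, since $k_\rho$ is independent of its second argument and $Kv$ is a genuine convolution, so $D^\beta Kv = (D^\beta k_\rho)\ast v$; in the general case the extra $\mathbf{x}$-dependence through the smooth coefficient vector $\mathbf{a}(\bar{\mathbf{x}})$ is differentiated as well, but smoothness of $k_\rho$ forces those terms to obey the same $\rho^{-1}$-per-derivative scaling.

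Finally I would pass from the pointwise representation to the $L^p$ bound: after extracting $\rho^{m+1-|\beta|}$, what remains is $D^{m+1}v$ integrated against a kernel of $\rho$-uniform $L^1$ norm, so Young's inequality for convolutions (or Minkowski's integral inequality in the $\mathbf{x}$-variable in the general case) yields the claimed
\[
\|D^\beta v - D^\beta Kv\|_{L^p(\Omega)} \le C(m)\,\rho^{m+1-|\beta|}\,\|v\|_{W^{m+1,p}(\Omega)}.
\]
I expect the main obstacle to be bookkeeping rather than depth: one must check that differentiating the reproducing identity is legitimate, i.e. that the moment conditions encoding $Kq=q$ are preserved under $D^\beta$ and that the resulting constant stays uniform in $\rho$. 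The dimensional condition $m > n/p - 1$, equivalently $(m+1)p > n$, enters precisely to embed $W^{m+1,p}(\Omega)$ into $C^0$ by Sobolev embedding, guaranteeing that the pointwise Taylor expansion and the point values $v(\mathbf{x})$, $Kv(\mathbf{x})$ used throughout are meaningful.
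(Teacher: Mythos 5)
First, a point of orientation: the paper itself contains no proof of this theorem --- it is imported verbatim from \cite{choe2006a} --- so your proposal can only be measured against the standard argument of that reference, which is indeed the one you outline: exact cancellation of the degree-$m$ Taylor polynomial by the reproducing property, a remainder of size $\rho^{m+1}$ on the $O(\rho)$ support of the scaled kernel, and a Minkowski/Young step to pass to $L^p$. Your reading of the two hypotheses is also correct: convexity of $\mathrm{supp}\,k_\rho\cap\overline{\Omega}$ keeps the Taylor segments $\mathbf{x}+t(\mathbf{y}-\mathbf{x})$ inside the domain, and $m>n/p-1$, i.e.\ $(m+1)p>n$, is the Sobolev condition that makes pointwise values and the pointwise expansion meaningful.

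There is, however, one step in your computation that fails as written. You propose to obtain $D^\beta(v-Kv)$ by applying Leibniz to the representation $v(\mathbf{x})-Kv(\mathbf{x})=-\int k_\rho(\mathbf{x}-\mathbf{y},\mathbf{x})\,R_m(\mathbf{y};\mathbf{x})\,d\mathbf{y}$, and you account only for derivatives landing on the kernel and on the monomials $(\mathbf{y}-\mathbf{x})^\alpha$. But $R_m(\mathbf{y};\mathbf{x})$ depends on $\mathbf{x}$ in a third place: through the factor $D^\alpha v\bigl(\mathbf{x}+t(\mathbf{y}-\mathbf{x})\bigr)$ in the integral remainder. Derivatives hitting that factor produce $D^{m+2}v$ (and, for $|\beta|\ge 2$, up to $D^{m+|\beta|}v$), which is not controlled by $\|v\|_{W^{m+1,p}}$; the cancellations that rescue the estimate are exactly what this term-by-term bookkeeping does not see. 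The standard repair --- which you name in your closing paragraph but do not actually deploy in the computation --- is to differentiate the reproducing identity itself: since $Kq=q$ holds identically in $\mathbf{x}$ for every $q\in P_m$, the differentiated kernel $k^{(\beta)}(\mathbf{y};\mathbf{x}):=D_\mathbf{x}^\beta\bigl[k_\rho(\mathbf{x}-\mathbf{y},\mathbf{x})\bigr]$ satisfies the moment conditions $\int k^{(\beta)}(\mathbf{y};\mathbf{x})\,q(\mathbf{y})\,d\mathbf{y}=D^\beta q(\mathbf{x})$. Testing these against the Taylor polynomial with \emph{frozen} base point, $T_m v(\cdot\,;\mathbf{x})$, and using $D^\beta T_m v(\cdot\,;\mathbf{x})\big|_{\mathbf{x}}=D^\beta v(\mathbf{x})$ for $|\beta|\le m$, yields $D^\beta v(\mathbf{x})-D^\beta Kv(\mathbf{x})=-\int k^{(\beta)}(\mathbf{y};\mathbf{x})\,R_m(\mathbf{y};\mathbf{x})\,d\mathbf{y}$, in which $R_m$ is never differentiated in $\mathbf{x}$. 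The scaling $\|k^{(\beta)}\|_{L^\infty}\le C\rho^{-n-|\beta|}$ with $O(\rho)$ support --- which requires uniform invertibility of the moment matrix $M(\mathbf{x})$ up to the boundary, the second place the convexity hypothesis enters --- then supplies the net factor $\rho^{m+1-|\beta|}$, and your final Minkowski/Young step goes through unchanged.
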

Now let us consider the discrete problem. Let $R = \lbrace \mathbf{x}_I : I = 1,2,\cdots, N \rbrace$ be a regular node set.
For given computation node $\mathbf{x}_I \in R$, we obtain the shape function $\Psi_I$ from moving least square reproducing kernel (MLSRK) method by Liu and Belytschko~\cite{liu1997}. In fact the approximation is a linear combination of shape functions $\Psi_I$ for given node point $I$ and define the discrete projection operator $\Gamma$ by
    \begin{equation*}
        \Gamma u = \sum_{I} \Psi_I u(\mathbf{x}_I).
    \end{equation*}
The exact form of discrete shape function due to Liu and
Belytschko\cite{liu1997} is
    \begin{align*}
        M(\mathbf{x})&=\sum_I P_m^t \left(\frac{\mathbf{x}-\mathbf{x}_I}{\rho}\right)
        P_m\left(\frac{\mathbf{x}-\mathbf{x}_I}{\rho}\right)
        \frac{1}{\rho^n}\Phi\left(\frac{\mathbf{x}-\mathbf{x}_I}{\rho}\right),\\
        \Psi_I(\mathbf{x})& =
        P_m(0)[M(\mathbf{x})]^{-1}P_m^t\left(\frac{\mathbf{x}-\mathbf{x}_I}{\rho}\right)
        \frac{1}{\rho^n} \Phi\left(\frac{\mathbf{x}-\mathbf{x}_I}{\rho}\right).
    \end{align*}
Note that a polynomial of degree less than $m$ is exactly reproduced by the discrete projection $\Gamma$.
If we consider discrete problems, the point collocation method is well suited to meshfree
scheme since the basis functions are differentiable at all orders and they can reproduce any polynomials locally at given degree.
We need only to differentiate the basis functions according to the partial differential equations.

Now we study the Stokes equations.
Let $\Psi_I$ be shape function at node $\mathbf{x}_I\in R$.
Define virtual collocation point set $T= \{\mbox{y}_J: J=1,...,M\}$.
We are looking for an approximate solution
    \begin{equation*}
        (\mathbf{u},p)=\left( \sum_{I=1}^N \Psi_I \mathbf{u}_I, \sum_{I=1}^N \Psi_I p_I
        \right),
    \end{equation*}
to the discrete Stokes equations in the context of point collocation at each virtual node point $\mathbf{y}_J\in T$,
\begin{align*}
    -\Delta \mathbf{u}(\mathbf{y}_J) + \nabla p(\mathbf{y}_J)
    &= \mathbf{f}(\mathbf{y}_J) , \\
    \nabla\cdot \mathbf{u }(\mathbf{y}_J) &= 0.
\end{align*}
An important fact in our point collocation scheme is that the virtual interpolation point set $T$ is not necessarily the node point sets $R$.
Indeed, we are going to evaluate velocity and pressure coefficients from discrete Stokes equations at virtual interpolation points in $T$ which are collocation points.
Therefore we have a great freedom to choose node sets.

For simplicity we assume $2D$ case.
We denote numerical derivatives by using multi-index $\alpha$,
    \begin{equation*}
        \mathcal{D}^{\alpha} u(\mathbf{x}_J) = \sum_{I=1}^N\Psi_{I}^{[\alpha]}(\mathbf{x}_J)
        u_I, \quad \mathbf{x}_J\in T,
    \end{equation*}
where $\mathcal{D}^\alpha$ means $\alpha$-th numerical derivatives, $\mathcal{D}^{[2,0]} = \frac{\partial^2}{\partial x_1^2}$,
$\mathcal{D}^{[0,2]} = \frac{\partial^2}{\partial x_2^2}$, $\mathcal{D}^{[1,0]} = \frac{\partial}{\partial x_1}$,
$\mathcal{D}^{[0,1]} = \frac{\partial}{\partial x_2}$ and $\mathcal{D}^{[0,0]}$ means identity.
We write the discrete incompressible Stokes equations in matrix form,
	\begin{align*}
    	-AU +GP &= F,    \\
	    DU &= 0,
	\end{align*}
when we denote $u_I = u(\mathbf{x}_I)$, $v_I = v\left(\mathbf{x}_I\right)$, $p_I = p(\mathbf{x}_I)$, and $f_{1,I} = f_1\left(\mathbf{y}_I\right)$, $f_{2,I} = f_2\left(\mathbf{y}_I\right)$ for $\mathbf{x}_J\in R$ and $\mathbf{y}_I\in T$, and we have
\begin{align*}
    U &=
    \begin{pmatrix}
        u_1 & u_2 & \cdots & u_N & v_1 & v_2 & \cdots & v_N
    \end{pmatrix}^t,
    \\
    P &=
    \begin{pmatrix}
        p_1 & p_2 & \cdots & p_N
    \end{pmatrix}^t,
    \\
    F &=
    -\begin{pmatrix}
        f_{1,1} & f_{1,1} & \cdots & f_{1,M} & f_{2,1} & f_{2,2} & \cdots & f_{2, M}
    \end{pmatrix}^t.
\end{align*}
The stiffness matrix $A, G$ and $D$ matrix are following:
\begin{align*}
    A =
    \begin{pmatrix}
       \Delta_h \Psi^{[0,0]}_{M\times N} &\mathbf{0}_{M\times N} \\
        \mathbf{0}_{M\times N} & \Delta_h \Psi^{[0,0]}_{M\times N}
    \end{pmatrix}_{2M\times 2N},  \qquad
    G =
    \begin{pmatrix}
         \Psi^{[1,0]}_{M\times N}   \\
         \Psi^{[0,1]}_{M\times N}
    \end{pmatrix}_{2M\times N},
\end{align*}
where $\Delta_h$ the Laplace operator in finite difference type, the $I,J$ component of the matrix,
\begin{align*}
    \left(\Psi^{[\alpha,\beta]}_{M\times N}\right)_{IJ}  = \Psi^{[\alpha,\beta]}_J\left(\mathbf{x}_I\right)
\end{align*}

We introduce the virtual interpolation point for matrix $D^*$ corresponding to the discrete divergence operator.
The virtual interpolation points for virtual staggered grid points for velocity field and pressure at virtual interpolation point of node $\mathbf{x}_I \in T$ are:

\begin{align*}
\mathbf{z}_{I,1}^{+} &= \mathbf{x}_I+(h/2,0),\qquad
\mathbf{z}_{I,1}^{-} = \mathbf{x}_I-(h/2,0),\\
\mathbf{z}_{I,2}^{+} &= \mathbf{x}_I+(0,h/2),\qquad
\mathbf{z}_{I,2}^{-} = \mathbf{x}_I-(0,h/2),
\end{align*}
and define the discrete divergence
\begin{align*}
    (D^* U)_I &=\\
       & \frac{1}{h} \sum_{J=1}^N \left[\Psi_{J}^{[0,0]}(\mathbf{z}_{I,1}^{+}) - \Psi_{J}^{[0,0]}(\mathbf{z}_{I,1}^{-})\right]u_J+
        \frac{1}{h} \sum_{J=1}^N \left[\Psi_{J}^{[0,0]}(\mathbf{z}_{I,2}^{+}) -
        \Psi_{J}^{[0,0]}(\mathbf{z}_{I,2}^{-})\right]v_{J}.
\end{align*}
So we can write the numerical dual operator of divergence by numerical derivative matrix $D$,
\begin{align*}
    D=
    \begin{pmatrix}
    D_h \Psi^{[0,0]}_{M\times N} \\
    D_h \Psi^{[0,0]}_{M\times N}
    \end{pmatrix}_{2M\times N},
\end{align*}
where $D_h$ means finite difference operator.

\begin{remark}
    By adopting periodic boundary condition, we can extend to whole plane.
\end{remark}

We employ the discrete divergence operator $D^*$ to define the
discrete Laplace operator $AU= D^*(D U)$ instead of stiffness matrix
$A$. Instead of the gradient matrix $G P$ of the pressure, we
formulate the velocity equations by $D P$. But the replacement is
simply for the convenience of analysis and the existence proof will
hold for $GP$ after considering projection error, too.

\section{Existence and stability}
Now we prove that the virtual point collocation scheme is stable for the Stokes flow
    \begin{align}       \label{discretestokes}
        -AU +D P &=F,  \\
        D^* U &= 0,\nonumber
    \end{align}
when the approximation node set $\{ \mathbf{x}_I\}=R$ is
sufficiently dense. To be more specific we introduce a definition.
	\begin{definition}{(\bf Realization)}
		The node set $R=\lbrace\mathbf{x}_I, I=1,...,N \rbrace$ realizes the set of virtual interpolation point $T=\lbrace \mathbf{y}_J, J=1,...,M\rbrace$ if for each $U\in \mathbb{R}^{M}$ there is $u\in \mathbb{R}^N$ such that
    	\begin{equation*}
        	U_{J} = \sum_{I=1}^{N} \Psi_I(\mathbf{y}_J)u_I.
   	 \end{equation*}
	\end{definition}

We find that the number of element $N$ of approximation node set $R$ must be greater than or equal to the number of element $M$ of  the virtual collocation point set $T$ for the realization.
Moreover the representation is not unique if there are sufficiently more approximation nodes than the virtual interpolation point nodes.
Therefore we can not have uniqueness of solution but the existence is guaranteed by the following inf-sup stability theorem.
We assume our virtual interpolation point set $T$ is regular grid so that the nodes are lattice points $\{(kh,jh)\}$, where $k$ and $j$ are integers and the edge length $h$ is a positive number.
	\begin{thm}\label{theorem:inf-sup_for_vip}
		We let the virtual collocation point set
		$S=\lbrace\mathbf{z}^{\pm}_{J,i}: i=1,2, J=1,\cdots,M \rbrace =\lbrace \mathbf{z}_J, J=1,\cdots,2M \rbrace$ and virtual node point set $T=\lbrace \mathbf{y}_I : I = 1, \cdots, M \rbrace$ form virtual staggered structure(See Fig. \ref{fig:staggered grids}).
		Suppose that $R=\lbrace\mathbf{x}_I\rbrace$ realizes the regular virtual collocation point sets $S$ and $T$.
		Then there is a positive $\mu>0$ independent of $h$ satisfying the inf-sup condition due to Ladyzhenskaya-Brezzi-Babuska such that
    \begin{equation}   \label{discreteinfsup}
        \inf_P \sup_U \langle D^* U, P\rangle  \geq \mu \| P\|_{l^2} \| D U
        \|_{l^2},
    \end{equation}
	\end{thm}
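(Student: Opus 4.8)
The plan is to strip away the meshfree layer using the Realization hypothesis and then reduce (\ref{discreteinfsup}) to a purely finite-difference inf-sup statement on the regular periodic lattice $T$, which I would resolve by discrete Fourier analysis. First I would use that $R$ realizes both $S$ and $T$: the map $u\mapsto U$ with $U_J=\sum_I\Psi_I(\mathbf{y}_J)u_I$ is surjective, so as $u$ ranges over $\mathbb{R}^N$ the staggered grid values $\tilde u(\mathbf{z}^{\pm}_{I,i})=\sum_J\Psi_J(\mathbf{z}^{\pm}_{I,i})u_J$ range over all admissible grid functions. Since $D^{*}$ and the discrete gradient entering $\|DU\|_{l^2}$ act only through these grid values, the supremum over $U$ equals the supremum over all staggered grid velocities, and $D^{*}$ becomes exactly the staggered central-difference divergence. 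Thus it suffices to prove the bound for the exact Marker-and-Cell operators on the lattice $\{(kh,jh)\}$, with the constant mode of $P$ excluded, just as in the continuous periodic setting where the pressure is determined only up to an additive constant.

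Second, because $T$ is a regular periodic lattice and every operator involved is a translation-invariant convolution, I would diagonalize $D^{*}$ and the discrete gradient by the discrete Fourier transform. Writing a frequency as $\theta=(\theta_1,\theta_2)\in(-\pi,\pi]^2$, the half-shift difference $\tfrac1h[\,\tilde u(\mathbf{x}+(h/2,0))-\tilde u(\mathbf{x}-(h/2,0))\,]$ carries the symbol $\tfrac{2i}{h}\sin(\theta_1/2)$, so the divergence acquires the symbol $\sigma(\theta)=\tfrac{2i}{h}\bigl(\sin(\theta_1/2),\sin(\theta_2/2)\bigr)$, while the full velocity gradient carries a symbol $g(\theta)$ whose squared modulus sums the symbols of the four derivatives $\partial_x u,\partial_y u,\partial_x v,\partial_y v$. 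Mode by mode the inf-sup quotient then collapses to the explicit ratio
\[
\sup_{\widehat U}\frac{|\sigma(\theta)\cdot\widehat U|}{|g(\theta)|\,|\widehat U|}=\frac{|\sigma(\theta)|}{|g(\theta)|},
\]
attained by aligning the velocity mode with $\overline{\sigma(\theta)}$, so that by Parseval $\mu=\min_{\theta\neq 0}|\sigma(\theta)|/|g(\theta)|$.

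Third, I would show this minimum is a fixed positive number independent of $h$. Near $\theta=0$ both $|\sigma(\theta)|$ and $|g(\theta)|$ vanish to first order and scale like $|\theta|/h$, so their ratio stays bounded away from $0$; away from the origin the decisive point is that $\sigma(\theta)$ does not degenerate. This is precisely where the half-shift staggering is essential: $\sin(\theta_j/2)=0$ only at $\theta_j=0$ on $(-\pi,\pi]$, hence $\sigma(\theta)=0$ only for the constant mode $\theta=(0,0)$, which is excluded. In particular at the checkerboard frequency $\theta=(\pi,\pi)$ one has $|\sigma|=2\sqrt2/h\neq 0$, whereas a collocated (non-staggered) difference would replace $\sin(\theta_j/2)$ by $\sin\theta_j$ and put this mode into the pressure kernel, destroying stability. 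Using $|\sin(\theta_j/2)|\ge \tfrac{2}{\pi}|\theta_j/2|$ for the lower bound near the origin and the smoothness of $g$ for the upper bound, the ratio $|\sigma(\theta)|/|g(\theta)|$ extends to a continuous strictly positive function on the torus, whose minimum is an absolute constant; since the discrete frequencies form a subset of the torus for every $h$, this furnishes (\ref{discreteinfsup}) with $h$-independent $\mu$.

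The main obstacle is the transfer step in the first paragraph. If the velocity energy norm $\|DU\|_{l^2}$ is taken with the analytic shape-function derivatives $\Psi^{[1,0]},\Psi^{[0,1]}$ rather than with grid differences, then after realization the operator is no longer an exact lattice convolution, and I must control the discrepancy between the analytic-derivative norm and the staggered finite-difference gradient norm uniformly across all frequencies. The delicate regime is the high modes near $\theta=(\pi,\pi)$, where interpolation estimates such as Theorem \ref{theorem:diff_interpolation} are weakest because the underlying grid functions are not smooth on the scale of $h$. I would handle this by invoking the exact reproduction of linear polynomials by $\Gamma$, which forces the shape-function central differences to coincide with the true differences on the reproduced part and leaves only a perturbation small relative to $|\sigma(\theta)|$; a perturbation argument then preserves the uniform lower bound. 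Verifying that this perturbation is genuinely uniform in $h$ over the whole frequency band, and that the regularity and density assumptions on $R$ suffice for it, is the crux of the argument.
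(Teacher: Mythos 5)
Your proposal is correct in substance, but it takes a genuinely different route from the paper. You reduce, via the realization hypothesis, to the pure staggered (MAC) operators on the periodic lattice and then diagonalize by the discrete Fourier transform, obtaining $\mu$ as the minimum over the torus of the symbol ratio $|\sigma(\theta)|/|g(\theta)|$ with $\sigma(\theta)=\tfrac{2i}{h}\bigl(\sin(\theta_1/2),\sin(\theta_2/2)\bigr)$. The paper instead runs a Fortin-type construction: given $P$ it forms the piecewise-constant extension $\overline{P}$, solves the continuous problem $\mathrm{div}\,\mathbf{v}=\overline{P}$ with $\|\nabla\mathbf{v}\|_{L^2}\leq C\|\overline{P}\|_{L^2}$ on the periodic square, defines the staggered test velocity $V^{\pm}_{I,i}$ by averaging components of $\mathbf{v}$ over cell edges, and then exploits the exact identity $h^2\langle D^*V,P\rangle=\int_\Omega \mathrm{div}\,\mathbf{v}\,\overline{P}\,d\mathbf{x}=h^2\|P\|^2_{l^2}$ together with the H\"older bound $\|DV\|_{l^2}\leq C\|P\|_{l^2}$; realization is invoked only at the end to produce a node vector interpolating $V$. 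Your Fourier route is more explicit --- it yields a computable $\mu$ and makes visible exactly why the half-shift staggering keeps the checkerboard mode $\theta=(\pi,\pi)$ out of the pressure kernel, a point the paper never surfaces --- but it is wedded to translation invariance of the regular periodic lattice, whereas the paper's averaging argument produces the test function directly and is the standard template (cf.\ the finite-difference reference the paper cites) that adapts to less structured settings. Three smaller observations: the transfer difficulty you flag as ``the crux'' in your last paragraph largely dissolves, because the paper's matrix $D$ is defined through the finite-difference operator $D_h$ acting on interpolated values rather than through the analytic derivatives $\Psi^{[1,0]},\Psi^{[0,1]}$, so after realization the reduction to exact lattice operators is immediate and no high-frequency symbol perturbation needs to be controlled; your explicit exclusion of the constant pressure mode is actually cleaner than the paper, which handles it only implicitly (solvability of $\mathrm{div}\,\mathbf{v}=\overline{P}$ in the periodic setting forces $\overline{P}$ to have zero mean); and your mode-by-mode supremum should be implemented with a single global test choice such as $\widehat{U}(\theta)=\overline{\sigma(\theta)}\,\widehat{P}(\theta)/|g(\theta)|^2$, since the modes couple through the norm $\|DU\|_{l^2}$ --- this is routine, but as written your pointwise ratio argument skips it.
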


\begin{proof}
Suppose $P$ is an arbitrary vector in $\mathbb{R}^M$ corresponding to the regular node point set $T=\lbrace \mathbf{y}_I, I=1,...,M \rbrace$.
To use integral, we recall the extension pressure $\overline{P}$ that is  piecewise constant corresponding to discrete pressure $P$ such that
    \begin{equation*}
        \overline{P}(\mathbf{z}) = P_I, \quad\mbox{if}\quad |z_1-y_{I,1}| < \frac{h}{2} \quad \mbox{and}\quad |z_2- y_{I,2}| < \frac{h}{2}.
    \end{equation*}
Since $\overline{P}\in L^2(\Omega)$ and the domain is square, there is $\mathbf{v}=(v_1,v_2) \in H^1_{per}(\Omega)$ satisfying
    \begin{equation*}
        \mbox{div} \mathbf{v} = \overline{P}\quad\mbox{and}\quad \|\nabla
        \mathbf{v}\|_{L^2} \leq C \| \overline{P}\|_{L^2},
    \end{equation*}
for a constant $C$. Since we consider periodic domain, we may assume $\int_\Omega \mathbf{v} d\mathbf{x} = 0$.
Since the virtual collocation point set of velocity and pressure form a virtual staggered structure, we have a  discrete velocity $\lbrace V_{IJ}^\pm \rbrace: I=1,\cdots, M, J=1,2. \rbrace$ such that
	\begin{align*}
		 V_{I,1}^{+}&=\int_{0}^{1}v_1(\mathbf{y}_I +(0,ht)) dt,\\
		V_{I,1}^{-}&=\int_{0}^{1}v_1(\mathbf{y}_I +(0,-ht)) dt,\\
		V_{I,2}^{+}&=\int_{0}^{1}v_2(\mathbf{y}_I +(ht,0)) dt,\\
		V_{I,2}^{-}&=\int_{0}^{1}v_2(\mathbf{y}_I +(-ht,0)) dt,
	\end{align*}
where $h$ is the edge length of grid partition and $\mathbf{y}_I\in T$.
If we define the discrete area element
$A_I= \lbrace(z_1, z_2): |z_1-y_{I,1}|<h/2,
|z_2-y_{I,2}|<h/2\rbrace$, then
    \begin{align*}
        \frac{V_{I,1}^{+} -V_{I,1}^{-}}{h} = \frac{1}{h^2} \int_{A_I}
        \frac{\partial v_1}{\partial y_1} dA,\quad \frac{V_{I,2}^{+}
        -V_{I,1}^{-}}{h} = \frac{1}{h^2} \int_{A_I} \frac{\partial v_2}{\partial y_2}
        dA,
\end{align*}
and thus we have
    \begin{align*}
        \int_\Omega \mbox{div}\mathbf{v} \overline{P} dz_1dz_2 &=h
        \sum_{I=1}^{M} \left[\left(V_{I,1}^+ -V_{I,1}^-\right)+\left(V_{I,2}^+ -V_{I,2}^-\right)\right]P_I \\
        &= h^2 \langle D^* V,P \rangle= h^2 \sum_{I=1}^{M} |P_I|^2=h^2\|P\|^2_{l^2} .
    \end{align*}
From H\"older inequality we have
    \begin{equation*}
        \left| \frac{V_{I,i}^{+} -V_{I,i}^{-}}{h}\right|^2 =
        \left|\frac{1}{h^2} \int_{A_I} \frac{\partial v_i}{\partial z_i}
        dA\right|^2 \leq \frac{1}{h^2} \int_{A_I} \left|\frac{\partial v_i}{\partial z_i}\right|^2
        dA,
    \end{equation*}
for $i=1, 2$ and
$$ h^2 \|D V\|^2_{l^2} \leq C \int_\Omega |\nabla
v|^2 \leq C h^2 \|P\|_{l^2}^2.
$$

Considering all terms, we prove the discrete inf-sup condition (\ref{discreteinfsup}).
For the proof of inf-sup condition of staggered grid for finite difference scheme, we refer \cite{shin2009}.
Therefore, the existence of discrete solution vector $(U,P)$ to (\ref{discretestokes}) follows from inf-sup condition.

It remains to show that any virtual velocity vector $\lbrace V_I; I=1,...,2M \rbrace$ can be realized by the real node velocity vector $\lbrace \mathbf{u}_I \rbrace$ on $R$ by interpolation.
Since we are assuming that $R=\lbrace\mathbf{x}_I, I=1,...,N\rbrace$ realizes the regular velocity virtual node sets $S$ and $T$, any vector $(V,P)$ can be written as
    \begin{equation*}
        V_I =\left( \sum_J \Psi_J(\mathbf{z}_I) u_J, \sum_J
        \Psi_J(\mathbf{z}_I) v_J \right) \quad\mbox{and}\quad P_I=\sum_J
        \Psi_J(\mathbf{y}_I) p_J
    \end{equation*}
for all $\mathbf{z}_I\in S$ and $ \mathbf{y}_I\in T$.
\end{proof}

As a corollary, we have the existence of approximate solution.
\begin{cor} \label{corrollary:approximation}
We suppose that all the node sets satisfy the conditions in Theorem \ref{theorem:inf-sup_for_vip}. Then, there exists an approximate solution
    \begin{equation*}
        (U,P)=\left( \sum_{I=1}^N \Psi_I \mathbf{u}_I, \sum_{I=1}^N \Psi_I p_I
        \right),
    \end{equation*}
such that $\lbrace (\mathbf{u}(\mathbf{z}_I),p(\mathbf{y}_J));I=1,\cdots,2M,
J=1,\cdots,M\rbrace=(U, P)$ is solution to (\ref{discretestokes}).
Moreover $(U,P)$ satisfies
    \begin{equation*}
        \|D U\|_{l^2}+\|P\|_{l^2} \leq C \|F\|_{l^2},
    \end{equation*}
for a constant $C$.
\end{cor}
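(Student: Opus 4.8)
The plan is to recognize the corollary as the standard consequence of the discrete inf-sup condition in a finite-dimensional saddle-point setting, so that the abstract existence theorem of Section~1 (equivalently Brezzi's theorem \cite{brezzi1974}) applies almost verbatim once its two hypotheses are translated to the discrete bilinear forms. First I would introduce $a(U,V)=\langle DU, DV\rangle$ and $b(V,P)=\langle D^* V, P\rangle$ on the finite-dimensional velocity and pressure spaces, equipped with the norms $\|U\|:=\|DU\|_{l^2}$ and $\|P\|_{l^2}$. Since the scheme defines the discrete Laplacian by $AU=D^*(DU)$ and $D^*$ is constructed as the numerical adjoint (transpose) of $D$, the summation-by-parts identity $\langle D^* W, V\rangle=\langle W, DV\rangle$ of the periodic staggered structure gives $\langle AU, V\rangle=\langle DU, DV\rangle=a(U,V)$ and identifies the pressure term as $\langle DP, V\rangle=b(V,P)$. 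Hence \eqref{discretestokes} is exactly the saddle-point system $a(U,V)+b(V,P)=\langle F, V\rangle$, $b(U,Q)=0$.

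Next I would verify the two Brezzi hypotheses. Coercivity of $a$ is immediate and holds with constant one on the whole velocity space, because $a(U,U)=\|DU\|_{l^2}^2=\|U\|^2$ by the very choice of the velocity norm; this is precisely why the paper replaces the stiffness matrix by $A=D^*D$, and it makes the coercivity constant trivially independent of $h$ (and insensitive to any degeneracy of $D$, since $\|DU\|$ is itself the norm used in the estimate). The inf-sup hypothesis for $b$ is Theorem~\ref{theorem:inf-sup_for_vip}, which supplies a constant $\mu>0$ independent of $h$. With both constants in hand, and the problem being finite-dimensional, the abstract existence theorem of Section~1 yields a pair $(U,P)$ solving \eqref{discretestokes} together with the a priori bound $\|DU\|_{l^2}+\|P\|_{l^2}\le C\|F\|_{l^2}$, where $C=C(\mu)$ is assembled from the coercivity and inf-sup constants alone.

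Finally I would pass from the abstract solution vectors to the nodal coefficient representation. Because the hypotheses of Theorem~\ref{theorem:inf-sup_for_vip} already assume that $R$ realizes the virtual sets $S$ and $T$, the components of $U$ and $P$ can be written as meshfree interpolants evaluated at the virtual points, $V_I=(\sum_J\Psi_J(\mathbf{z}_I)u_J,\sum_J\Psi_J(\mathbf{z}_I)v_J)$ and $P_I=\sum_J\Psi_J(\mathbf{y}_I)p_J$, exactly as in the last step of the proof of the theorem; this produces the coefficients $\mathbf{u}_I,p_I$ and hence the claimed form $(U,P)=(\sum_I\Psi_I\mathbf{u}_I,\sum_I\Psi_I p_I)$. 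The step I expect to require the most care is not existence but the $h$-independence of $C$: I would trace it back solely to the $h$-uniform $\mu$ of the theorem and to the coercivity constant being identically one, checking that no hidden $h$-dependence slips in through the adjointness (summation-by-parts) identity or through the realization step, which only asserts the existence of nodal coefficients and plays no role in the stability bound.
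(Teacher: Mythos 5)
Your proposal is correct and takes essentially the same route as the paper: the corollary is obtained exactly as you describe, by feeding the discrete inf-sup condition of Theorem~\ref{theorem:inf-sup_for_vip} (together with the trivial coercivity of $a(U,V)=\langle DU,DV\rangle$ coming from the choice $AU=D^*(DU)$) into the abstract saddle-point existence theorem of Section~1, and then invoking the realization hypothesis to express $(U,P)$ through the shape functions, which is precisely the closing step of the paper's proof of that theorem. The one detail you pass over---converting the bound on the functional $V\mapsto\langle F,V\rangle$ in the dual norm induced by $\|DV\|_{l^2}$ into the stated $\|F\|_{l^2}$ bound, which requires a discrete Poincar\'e inequality and working modulo the kernel of $D$---is likewise left implicit in the paper, which only invokes discrete Poincar\'e later in the proof of Theorem~\ref{theorem:stability}.
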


\begin{figure}[H]
        \centering
        \includegraphics[width=0.4\textwidth]{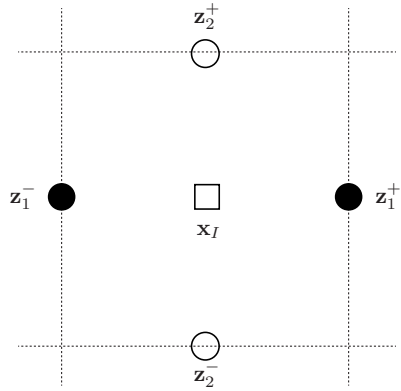}
        \caption{The virtual collocation points $\mathbf{z}_{J,i}^{\pm}$ and the virtual node point $\mathbf{x}_I$.}
        \label{fig:staggered grids}
\end{figure}

For the stability and convergence of virtual interpolation scheme, we assume that
dilation parameter $\rho$ of window function of Theorem \ref{theorem:diff_interpolation} is
comparable to the node interval $h$, namely, there is $C$ satisfying
    \begin{equation*}
        0<\rho <Ch.
    \end{equation*}
If we let $(\mathbf{v},q)$ the true solution in
$H^1_{per}(\Omega)\times L^2_{per}(\Omega)$, then
    \begin{equation*}
        \|\mathbf{v}\|_{H^1(\Omega)}+ \|q\|_{L^2(\Omega)} \leq C\|\mathbf{f}
        \|_{H^{-1}(\Omega)}.
    \end{equation*}
In case of periodic domain with regular node set, the continuous
projection operator $K$ is a convolution of the kernel $k_\rho$ (see equation \eqref{globalapproximation}) and
thus we have
	\begin{equation*}
		-\Delta K \mathbf{v} +\nabla K q =  K \mathbf{f}
		\quad\mbox{and}\quad\mbox{div} K\mathbf{v}=0,
	\end{equation*}
and the stability of continuous projection follows from the energy estimate:

\begin{thm}
	Let $(\mathbf{u}, q) \in H_{per}^1(\Omega) \times L^2_{per}(\Omega)$, $(\mathbf{v},q)$ are a solution of \eqref{stokes} and $K$ is the continuous projection operator in (\eqref{globalapproximation}) then we have an inequality:
    \begin{equation*}
        \| K \mathbf{v}-\mathbf{v}\|_{H^1}+ \| K q - q \|_{L^2} \leq
       C \| K\mathbf{f}-\mathbf{f}\|_{H^{-1}}.
    \end{equation*}
\end{thm}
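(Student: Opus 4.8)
The plan is to exploit the fact that the projection error pair $(K\mathbf{v}-\mathbf{v},\,Kq-q)$ is itself a solution of the continuous Stokes system, now driven by the projected residual $K\mathbf{f}-\mathbf{f}$, and then simply to read off the a priori bound from the saddle-point stability theorem of Section 1 applied to this error pair. The whole argument rests on the observation recorded just above the statement: on the periodic cell $K$ is a convolution $Ku=k_\rho\ast u$, so $K$ commutes with every constant-coefficient differential operator and preserves periodicity.

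First I would subtract. Applying $K$ to \eqref{stokes} and using that $K$ commutes with $\Delta$, $\nabla$ and $\mathrm{div}$ gives $-\Delta K\mathbf{v}+\nabla Kq=K\mathbf{f}$ and $\mathrm{div}\,K\mathbf{v}=0$. Setting $\mathbf{e}:=K\mathbf{v}-\mathbf{v}$ and $\varepsilon:=Kq-q$ and subtracting the original system yields
\begin{align*}
-\Delta\mathbf{e}+\nabla\varepsilon &= K\mathbf{f}-\mathbf{f}, \\
\mathrm{div}\,\mathbf{e} &= 0,
\end{align*}
so that $(\mathbf{e},\varepsilon)$ solves Stokes with force $K\mathbf{f}-\mathbf{f}$. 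Testing against $\mathbf{w}\in X$ and integrating by parts (no boundary terms, by periodicity) produces, after the integration by parts, precisely the weak saddle-point system $a(\mathbf{e},\mathbf{w})+b(\mathbf{w},\varepsilon)=\langle K\mathbf{f}-\mathbf{f},\mathbf{w}\rangle$ and $b(\mathbf{e},r)=0$ for all $r\in M$, with datum $K\mathbf{f}-\mathbf{f}$ (the sign convention on the pressure being immaterial for the estimate).

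Next I would verify that $(\mathbf{e},\varepsilon)$ genuinely lies in the admissible space $X\times M$ and that the datum lies in $X'$, so that the abstract estimate applies. Periodicity is immediate since convolution on the torus preserves it. The mean-zero constraint defining $X$ follows because $k_\rho$ reproduces constants (this is the $|\beta|=0$, polynomial case of Theorem \ref{theorem:diff_interpolation}), hence $\int_\Omega k_\rho=1$ and, by Fubini and translation invariance, $\int_\Omega K\mathbf{v}=\int_\Omega\mathbf{v}=0$, giving $\int_\Omega\mathbf{e}=0$; normalizing the true pressure to mean zero handles $\varepsilon$ in $M$ identically. Since $\mathbf{f}\in L^2(\Omega)$ and convolution with the $L^1$-normalized kernel maps $L^2\to L^2$, the residual $K\mathbf{f}-\mathbf{f}\in L^2\hookrightarrow H^{-1}_{per}=X'$. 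Invoking the continuous a priori bound $\|\mathbf{u}\|_X+\|p\|_M\le C\|\mathbf{f}\|_{X'}$ for the pair $(\mathbf{e},\varepsilon)$ then gives
\begin{equation*}
\|K\mathbf{v}-\mathbf{v}\|_{H^1}+\|Kq-q\|_{L^2}=\|\mathbf{e}\|_X+\|\varepsilon\|_M\le C\|K\mathbf{f}-\mathbf{f}\|_{X'}=C\|K\mathbf{f}-\mathbf{f}\|_{H^{-1}},
\end{equation*}
where I used that $\|\cdot\|_X$ is the $H^1$-seminorm, which is equivalent to the full $H^1$ norm on mean-zero periodic fields by Poincar\'e's inequality.

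The only genuinely delicate point, and the one I would guard most carefully, is this membership and compatibility bookkeeping: that $K$ reproduces constants (so the projection error is mean free and the saddle-point estimate is applicable to it), that $K$ commutes \emph{exactly} with the differentiations — which holds only because of the convolution structure available for a regular periodic node set, and would fail for a general irregular node set — and that no boundary contributions survive the integration by parts. Once these are secured, the estimate is nothing more than the a priori bound of the Section 1 saddle-point theorem, read off for the error pair.
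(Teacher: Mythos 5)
Your proposal is correct and takes essentially the same route as the paper: the paper's (very terse) argument likewise uses the convolution structure of $K$ on the periodic domain to commute $K$ with $\Delta$, $\nabla$ and $\mathrm{div}$, so that the error pair $(K\mathbf{v}-\mathbf{v},\,Kq-q)$ solves the Stokes system \eqref{stokes} with datum $K\mathbf{f}-\mathbf{f}$, and then reads off the bound from the energy (saddle-point) estimate of Section~1. Your extra bookkeeping --- mean-zero of the error via reproduction of constants, periodicity, and membership of the residual in $X'$ --- only makes explicit what the paper leaves implicit.
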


The analysis of discrete projection $\Gamma$ for $\mathbf{v}$ and
$p$ is more complicated. Let us assume that the reproducing degree $m$ in \eqref{equation:polynomial} is greater than or equal to 2 and polynomials of degree two can be reproduced.

We suppose $\mathbf{v} \in C^{2,\alpha}$ for a $\alpha >0$. For
fixed $\mathbf{x}_I$, we have Taylor expansion if
$|\mathbf{x}-\mathbf{x}_I| \leq \rho\leq Ch$:
$$ \mathbf{v}(\mathbf{x})=\mathbf{v}(\mathbf{x}_I)+\nabla \mathbf{v}(\mathbf{x}_I)(\mathbf{x}-\mathbf{x}_I)
+\frac{1}{2}\nabla^2
\mathbf{v}(\mathbf{x}_I)(\mathbf{x}-\mathbf{x}_I)^2+C\|\nabla^2
\mathbf{v}\|_{C^\alpha}O(h^{2+\alpha}), $$ and from the reproducing
property
    \begin{equation*}
        \Gamma \mathbf{v}(\mathbf{x})= \mathbf{v}(\mathbf{x}_I)+\nabla
        \mathbf{v}(\mathbf{x}_I)(\mathbf{x}-\mathbf{x}_I)
        +\frac{1}{2}\nabla^2
        \mathbf{v}(\mathbf{x}_I)(\mathbf{x}-\mathbf{x}_I)^2+C\| \nabla^2
        \mathbf{v}\|_{C^\alpha}O(h^{2+\alpha}),
    \end{equation*}
and
    \begin{equation*}
        (A \Gamma \mathbf{v})_I =(D^* D (\Gamma\mathbf{v}) )_I = \Delta
        \mathbf{v}(\mathbf{x}_I) + O(h^\alpha).
    \end{equation*}
We have that
\begin{align*}
 \Gamma v_{1,x_1}(\mathbf{x}) &=
    v_{1,x_1} (\mathbf{x}_I)
        +v_{1,x_1 x_1}(\mathbf{x}_I)(x_1- x_{I,1})
        + v_{1,x_1 x_2}(\mathbf{x}_I)(x_2-x_{I,2})+   C\| \nabla^2
        \mathbf{v}\|_{C^\alpha}O(h^{1+\alpha})\\
   \Gamma v_{2,x_2}(\mathbf{x}) &=
   v_{2,x_2} (\mathbf{x}_I)
        +v_{2,x_2 x_2}(\mathbf{x}_I)(x_2- x_{I,2})
        + v_{2,x_1 x_2}(\mathbf{x}_I)(x_1-x_{I,1})+   C\| \nabla^2
        \mathbf{v}\|_{C^\alpha}O(h^{1+\alpha}),
\end{align*}
and we also have that, from divergence free condition,
\begin{align*}
 v_{1,x_1 x_2}(\mathbf{x}_I)(x_2- x_{I,2}) &+v_{2,x_2 x_2}(\mathbf{x}_I)(x_2- x_{I,2})=0\\
v_{2,x_1 x_2}(\mathbf{x}_I)(x_1- x_{I,1}) &+v_{1,x_1 x_1}(\mathbf{x}_I)(x_1- x_{I,1})=0.
\end{align*}
 Taking divergence of $\Gamma \mathbf{v}$ and
noting that $\nabla (\mbox{div} \mathbf{v})(\mathbf{x}_I)=0$, we
also have
    \begin{equation*}
        \mbox{div} \Gamma \mathbf{v} (\mathbf{x}_I) = \|\nabla^2
\mathbf{v}\|_{C^\alpha}O(h^{1+\alpha}),
    \end{equation*}
and similarly from mean value theorem $$ (D^* \Gamma \mathbf{v})_I
=\|\nabla^2 \mathbf{v}\|_{C^{\alpha}}O(h^{1+\alpha}).
$$
Similarly, we suppose $q\in C^{1,\alpha}$ and we have
    \begin{equation*}
        (\nabla \Gamma q)(\mathbf{x}_I) = \nabla q(\mathbf{x}_I)
        + \|\nabla q\|_{C^\alpha}O(h^\alpha),
    \end{equation*}
and from mean value theorem
    \begin{equation*}
    \|(D \Gamma q )_I- \nabla \Gamma q(\mathbf{x}_I) \| \leq C\|\nabla q\|_{C^{\alpha}} h^\alpha.
    \end{equation*}
Since $-\Delta \mathbf{v}(\mathbf{x}_I)+\nabla q(\mathbf{x}_I) =
\mathbf{f}(\mathbf{x}_I )$ and $\mathbf{f} \in C^{\alpha}$ we have
the error equation for virtual interpolation method,
\begin{align}\label{errorequation}
 -A(U-\Gamma \mathbf{v}) +D(P-\Gamma q) &=( F- \Gamma \mathbf{f}) + (\|\nabla^2 \mathbf{v}\|_{C^{\alpha}}+\|\nabla q\|_{C^\alpha}) O(h^\alpha),    \\
D^*(U-\Gamma\mathbf{v}) &=\|\nabla^2
\mathbf{v}\|_{C^\alpha}O(h^{1+\alpha})=\| \mathbf{f}
\|_{C^\alpha}O(h^{1+\alpha}).\nonumber
\end{align}
where $(U, P)$ are solution of the discrete Stokes equations \eqref{discretestokes}.

\begin{thm}\label{theorem:stability}
We suppose that all the node sets satisfy the conditions in Theorem
\ref{theorem:inf-sup_for_vip}, and the reproducing degree $m\geq 2$.
We also assume that $\mathbf{f}\in C^{\alpha}$. We let
$(\mathbf{v},q)$ the true solution in
$C^{2,\alpha}_{per}(\Omega)\times C^{1,\alpha}_{per}(\Omega)$ and $(U,P)$ discrete solution. Then, there is an absolute constant $C$
such that
    \begin{equation*}
        \| D( U - \Gamma \mathbf{v}) \|_{l^2} + \| P-\Gamma q\|_{l^2} \leq C
        h^{\alpha-1} \|\mathbf{f} \|_{C^{\alpha}(\Omega)}.
    \end{equation*}
\end{thm}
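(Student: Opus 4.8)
The plan is to regard the error pair $(E_U,E_P):=(U-\Gamma\mathbf{v},\,P-\Gamma q)$ as the solution of a discrete Stokes-type saddle point system and to invert it stably by means of the a priori bound already obtained in Corollary~\ref{corrollary:approximation}. By the error equation \eqref{errorequation} the pair satisfies
\begin{align*}
 -A E_U + D E_P &= R_1,\\
 D^* E_U &= R_2,
\end{align*}
with $R_1=(F-\Gamma\mathbf{f})+(\|\nabla^2\mathbf{v}\|_{C^\alpha}+\|\nabla q\|_{C^\alpha})O(h^\alpha)$ and $R_2=\|\nabla^2\mathbf{v}\|_{C^\alpha}O(h^{1+\alpha})$. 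Thus the theorem reduces to estimating the two residuals in the discrete $l^2$ norm and then applying stability, so I would proceed in three steps: bound $R_1,R_2$; lift the inhomogeneous divergence constraint; apply Corollary~\ref{corrollary:approximation}.

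First I would estimate the residuals, where the decisive feature is that $\|\cdot\|_{l^2}$ is the \emph{unnormalized} discrete norm over the $M=O(h^{-2})$ virtual nodes. Each component of $R_1$ is of pointwise size $O(h^\alpha)\|\mathbf{f}\|_{C^\alpha}$ (using $\mathbf{f}\in C^\alpha$ and the degree-$m$, $m\ge 2$, reproducing property of $\Gamma$ to control $F-\Gamma\mathbf{f}$), and each component of $R_2$ is $O(h^{1+\alpha})\|\mathbf{f}\|_{C^\alpha}$. Summing $M$ such contributions multiplies by $\sqrt{M}=O(h^{-1})$, so that
\[
 \|R_1\|_{l^2}\le C\,h^{\alpha-1}\|\mathbf{f}\|_{C^\alpha},\qquad \|R_2\|_{l^2}\le C\,h^{\alpha}\|\mathbf{f}\|_{C^\alpha}.
\]
This loss of one power of $h$ in the momentum residual is precisely the origin of the exponent $\alpha-1$ in the statement.

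Next I would remove the inhomogeneous constraint $D^*E_U=R_2$. Because $R=\{\mathbf{x}_I\}$ realizes the staggered sets and the inf-sup bound \eqref{discreteinfsup} makes $D^*$ surjective with a bounded right inverse, there is $W$ with $D^*W=R_2$ and $\|DW\|_{l^2}\le C\|R_2\|_{l^2}=O(h^\alpha)\|\mathbf{f}\|_{C^\alpha}$. Setting $\tilde E_U:=E_U-W$, the pair $(\tilde E_U,E_P)$ solves the discretely divergence-free system $-A\tilde E_U+D E_P=R_1+AW$, $D^*\tilde E_U=0$, to which the stability estimate of Corollary~\ref{corrollary:approximation} applies by linearity:
\[
 \|D\tilde E_U\|_{l^2}+\|E_P\|_{l^2}\le C\,\|R_1+AW\|_{l^2}.
\]

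The delicate step, and the one I expect to be the main obstacle, is controlling the correction term $AW=D^*(DW)$. Since $D^*$ is a finite difference divergence it carries a factor $1/h$, whence $\|AW\|_{l^2}\le (C/h)\|DW\|_{l^2}\le (C/h)\,O(h^\alpha)=O(h^{\alpha-1})\|\mathbf{f}\|_{C^\alpha}$. This matches the order of $\|R_1\|_{l^2}$ but does not exceed it, so $\|R_1+AW\|_{l^2}\le C\,h^{\alpha-1}\|\mathbf{f}\|_{C^\alpha}$. Finally $\|DE_U\|_{l^2}\le\|D\tilde E_U\|_{l^2}+\|DW\|_{l^2}$ with $\|DW\|_{l^2}=O(h^\alpha)$ again subordinate, which yields the claimed bound. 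The whole difficulty is thus a careful bookkeeping of scales: the momentum residual already loses one power of $h$ through the unnormalized norm, while the divergence residual is one power smaller and so survives the $1/h$ amplification by $A$ without degrading the leading $h^{\alpha-1}$ term.
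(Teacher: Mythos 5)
Your argument is correct in substance but takes a genuinely different route from the paper's. You treat the error pair as a fresh discrete Stokes problem: bound the two residuals in $l^2$ (with the explicit $\sqrt{M}\sim h^{-1}$ loss producing the $h^{\alpha-1}$ rate), lift the inhomogeneous divergence constraint by constructing $W$ with $D^*W=R_2$ and $\|DW\|_{l^2}\le C\|R_2\|_{l^2}$ via the inf-sup condition \eqref{discreteinfsup}, and then invoke the stability of Corollary~\ref{corrollary:approximation} for the shifted, discretely divergence-free system. The paper never shifts: it tests the inhomogeneous error equation \eqref{errorequation} directly with $E=U-\Gamma\mathbf{v}$, using the ellipticity $\langle AE,E\rangle=\|DE\|_{l^2}^2$ and the discrete Poincar\'e inequality to obtain the coupled bound \eqref{error}, then uses an inf-sup test vector in a \emph{duality} role (its $W$ represents $\|R\|_{l^2}$, not a divergence lifting) to get $\|R\|_{l^2}\le C\|DE\|_{l^2}+Ch^{\alpha-1}\|\mathbf{f}\|_{C^\alpha}$, and closes with Cauchy--Schwarz/Young. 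Both proofs rest on the same two pillars, ellipticity of $A=D^*D$ and Theorem~\ref{theorem:inf-sup_for_vip}, and both hinge on the same $h^{-1}$ bookkeeping; but the structure differs, and each version buys something. Yours is modular: the whole perturbation analysis reduces to one black-box stability estimate, and it isolates cleanly why the divergence residual, being one order better, survives the $h^{-1}$ amplification by $A$. The cost is two ingredients the paper does not need. First, you must upgrade Corollary~\ref{corrollary:approximation} from an existence-with-bound statement to an a priori estimate valid for an \emph{arbitrary} solution pair with \emph{arbitrary} right-hand side; its proof (energy plus inf-sup, with the mean-zero normalization needed for the discrete Poincar\'e inequality) does deliver this, and only $(DU,P)$ is unique, which is all you use, but this should be stated since the paper's solutions are non-unique at the level of node coefficients. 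Second, your bound $\|AW\|_{l^2}\le Ch^{-1}\|DW\|_{l^2}$ needs justification: it holds because $D^*$ has $l^2$ operator norm $O(h^{-1})$ (entries $h^{-1}\left[\Psi_J(\mathbf{z}^+)-\Psi_J(\mathbf{z}^-)\right]=O(h^{-1})$ with $O(1)$ nonzero entries per row and column, thanks to $\rho\le Ch$), whereas the paper sidesteps this entirely by pairing the divergence residual against the pressure error through $\langle R,D^*E\rangle$ with $\|D^*E\|_{l^2}=O(h^{\alpha})\|\mathbf{f}\|_{C^\alpha}$, never paying the $h^{-1}$ of $A$. Your final accounting is nonetheless consistent: since the momentum residual already sits at $h^{\alpha-1}$, the amplified lifting term does not degrade the rate, and the claimed estimate follows.
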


\begin{proof}
If we have $\mathbf{f}\in C^{\alpha}$, from Calderon-Zygmund theory
of Stokes equations we have $\mathbf{v}\in C^{2,\alpha}$ and $q\in
C^{1,\alpha}$. From Sobolev embedding we have
    \begin{equation*}
        \|\nabla^2 \mathbf{v}\|_{C^\alpha}+\|\nabla q\|_{C^\alpha} \leq
        C \|\mathbf{f}\|_{C^\alpha},
    \end{equation*}
and in case $\mathbf{f}\in C^{\alpha}$, we have
    \begin{equation*}
        \|F- \Gamma \mathbf{f} \|_{l^\infty} \leq Ch^{\alpha}
        \|\mathbf{f}\|_{C^{1,\alpha}}.
    \end{equation*}
The $(U,P)$ satisfies the discrete Stokes equations ($\ref{discretestokes}$) and therefore we have error equation \eqref{errorequation} for $(E,R) = (U- \Gamma \mathbf{v}, P-\Gamma q)$ such that at each $\mathbf{x}_I$
    \begin{align*}
        (-AE+D R)_I = O(h^\alpha)
        \|\mathbf{f}\|_{C^{\alpha}}\quad\mbox{and}\quad
        D^* E_I = O(h^{1+\alpha})
    \|\mathbf{f}\|_{C^{\alpha}}.
    \end{align*}
By the discrete Poincar\'e inequality (see~\cite{le2013}), with the
condition $\sum_I E_I =0$, we have
    \begin{equation*}
        \|E\|_{l^2} \leq C\| D E\|_{l^2}.
    \end{equation*}
Then simply applying $E$ to error equation and considering
ellipticity of discrete operator $A$, we prove that
    \begin{equation}\label{error}
        \|D E\|_{l^2}^2 \leq C h^{\alpha -1} \|\mathbf{f}\|_{C^{\alpha}} \|R\|_{l^2}
        + Ch^{2\alpha-2} \|\mathbf{f}\|^2_{C^{\alpha}}.
    \end{equation}
From inf-sup condition (see \ref{theorem:inf-sup_for_vip}), we find $W\in \mathbb{R}^{2M}$ such that
    \begin{equation*}
        \|R\|_{l^2}\leq C\frac{\langle D^* W,R \rangle}{\|D W\|_{l^2}}.
    \end{equation*}
Applying $W$ to error equation and we obtain from inf-sup condition
    \begin{equation*}
        C \|R\|_{l^2} \|D W\|_{l^2}\leq \langle D^* W,R \rangle
    =\langle DW,DE \rangle+O(h^{\alpha-1})
    \|\mathbf{f}\|_{C^{\alpha}} \|D W\|_{l^2}.
    \end{equation*}
Therefore we conclude
    \begin{align*}
         \|R\|_{l^2} &= \frac{1}{\|D W\|_{l^2}} \left(\langle DW,DE \rangle+
         O(h^{\alpha-1})\|\mathbf{f}\|_{C^{\alpha}}
         \|D W\|_{l^2}\right) \\
         &\leq C \|DE\|_{l^2} +C h^{\alpha-1} \|\mathbf{f}\|_{C^{\alpha}.
}
    \end{align*}
and from Cauchy-Schwarz inequality on the error of pressure $R$ term of
(\ref{error}) we prove the theorem.
\end{proof}

\begin{remark}
    Our theorem, \ref{theorem:stability} implies that
    \begin{align*}
    h^2\| D( U - \Gamma \mathbf{v}) \|^2_{l^2} &+ h^2\| P-\Gamma q\|^2_{l^2}
    \\
    &=
    \int_\Omega
\left|\overline{D( U - \Gamma \mathbf{v})}\right|^2 + \left|\overline{P - \Gamma
\mathbf{q} }\right|^2 d\mathbf{x}   \\
    &\leq
    Ch^{2\alpha} \|\mathbf{f}\|_{C^{\alpha}}^2.
    \end{align*}
\end{remark}

\section{Numerical examples}

In this section we present a series of test problems of increasing
complexity to demonstrate the accuracy and robustness of the VIP
method.

\subsection{Spatial convergence test}

We consider the Kovasznay flow, which is steady problem with
analytic expression. The velocity and pressure fields are given by
the following equations,
\begin{align*}
    u(x, y) &= 1 - e^{\lambda x}\cos(2\pi y), \\
    v(x, y) &= \frac{\lambda}{2\pi}e^{\lambda x}\sin(2\pi y), \\
    p(x, y) &= \frac{1}{2}\left(1 - e^{2\lambda x}\right),
\end{align*}
where $\lambda = \frac{Re}{2} -\left(\frac{Re^2}{4} +
4\pi^2\right)^{1/2} $ with $Re = 40$. We consider the Kovasznay flow
on the domain $\Omega = [-0.5, 1.5] \times [0, 2]$, which is
discretized with regular nodes. Fig.~\ref{covg:Kovasznay}(d) shows
the discrete norms of the errors in the velocity and pressure with
the analytical solutions. The contour lines for $u$-velocity,
$v$-velocity, and pressure are shown in
Fig.~\ref{covg:Kovasznay}(a)-(c).

\begin{figure}[H]
    \centering
    \subfigure[]{\includegraphics[width=0.45\textwidth]{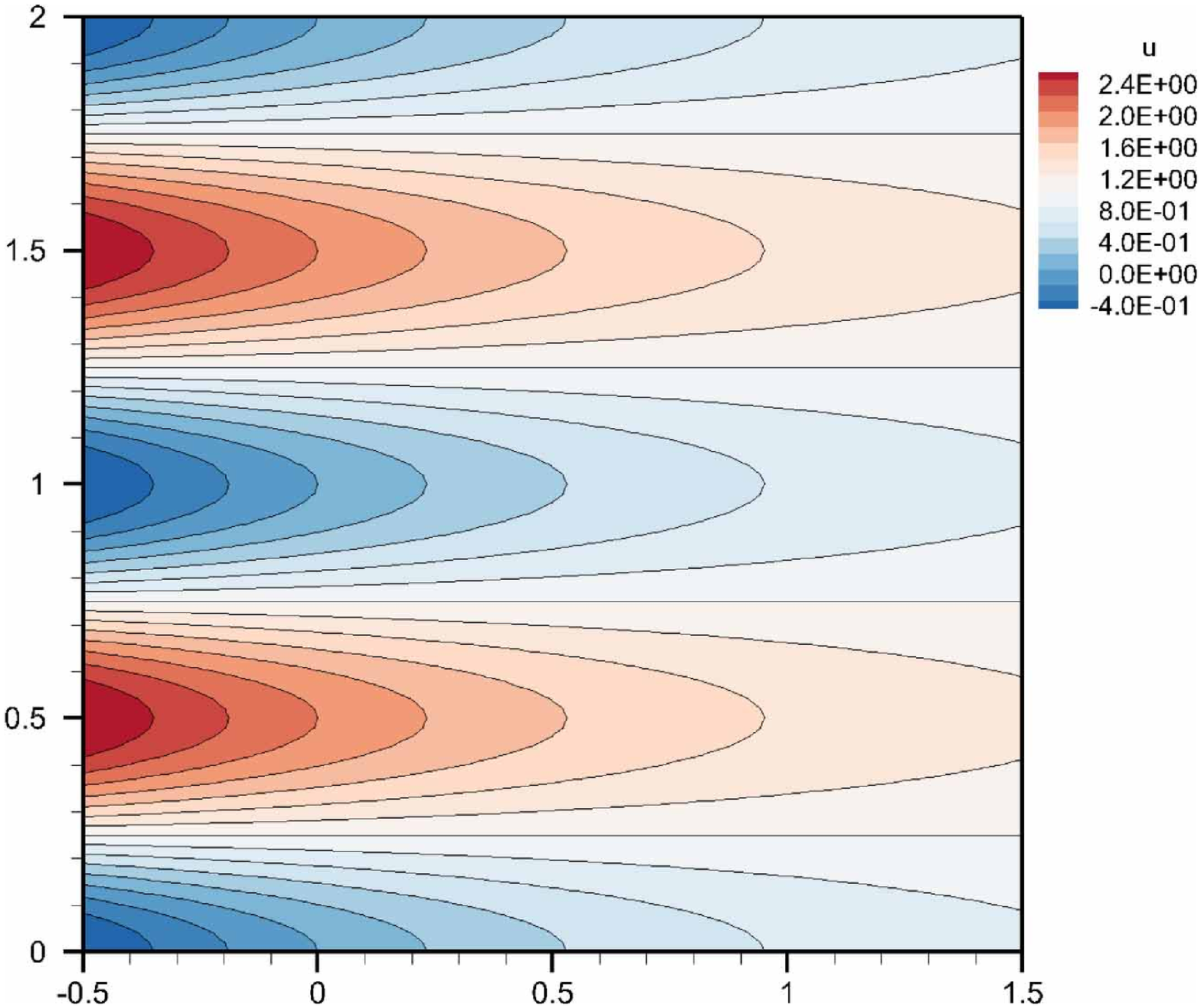}}
    \subfigure[]{\includegraphics[width=0.45\textwidth]{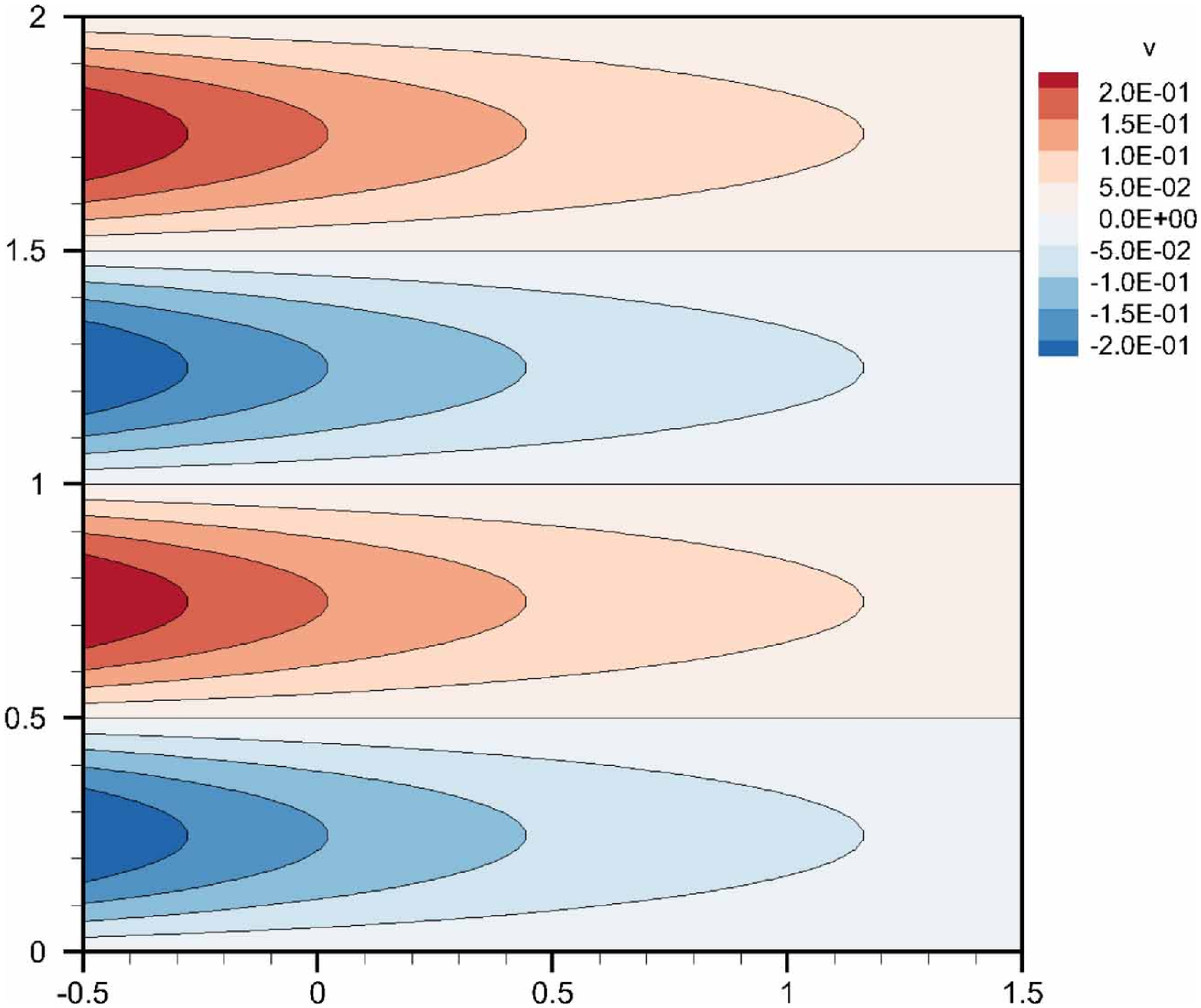}}
    \subfigure[]{\includegraphics[width=0.45\textwidth]{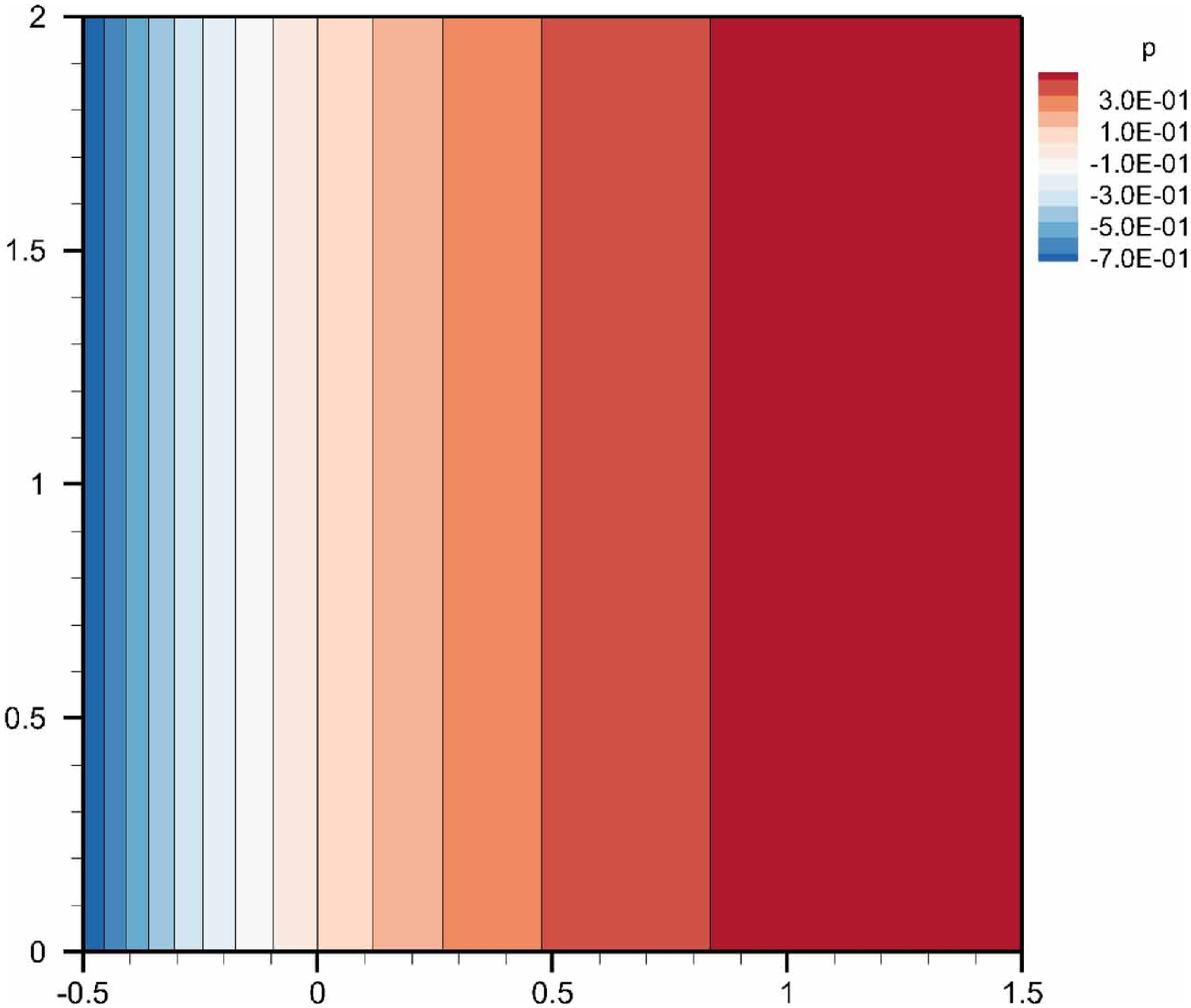}}
    \subfigure[]{\includegraphics[width=0.45\textwidth]{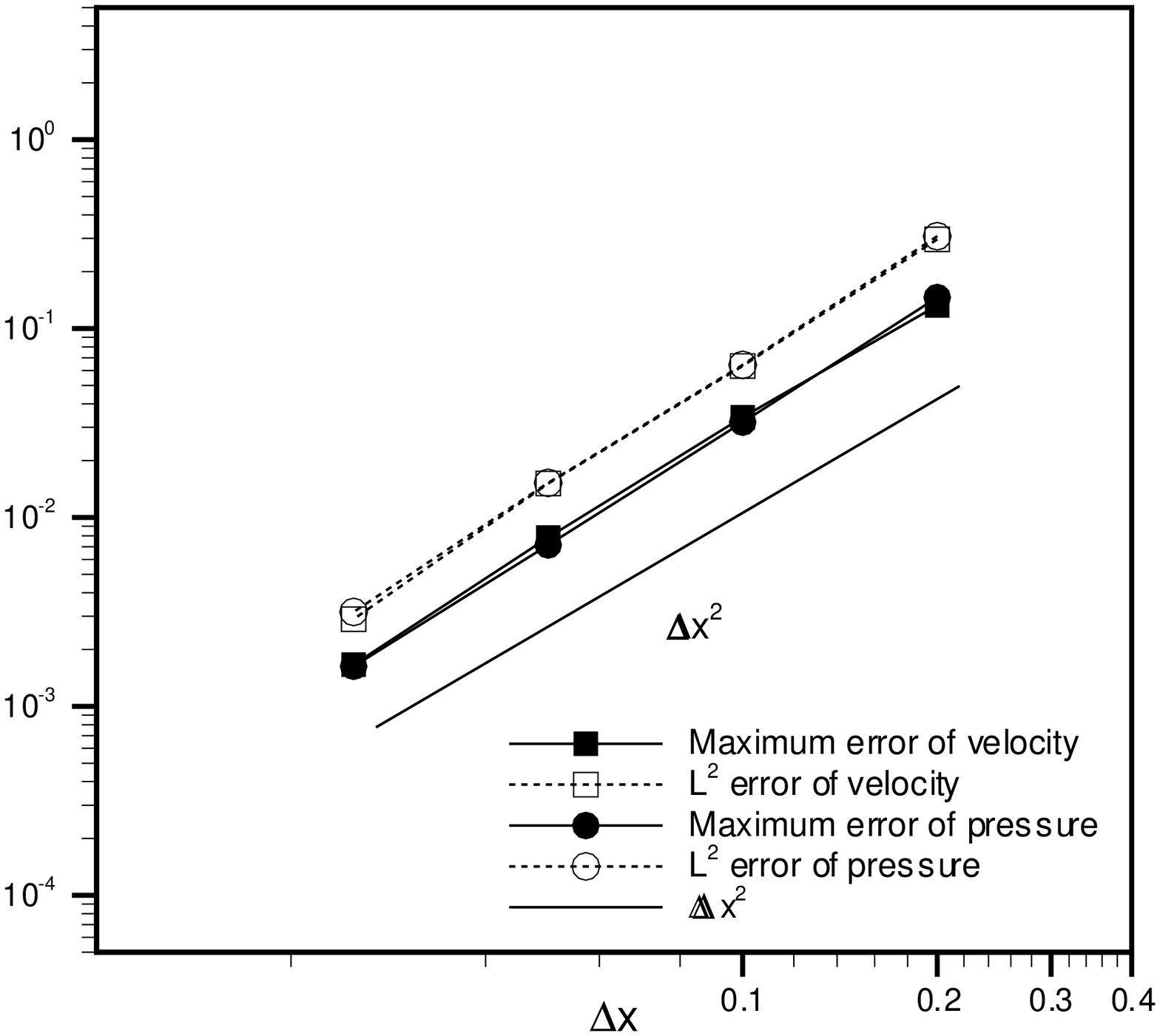}}
    \caption{Kovasznay flow : (a) $u$-velocity; (b) $v$-velocity; (c) pressure; (d) the convergence of the numerical solutions from the uniform nodes.}\label{covg:Kovasznay}
\end{figure}

\subsection{Lid-driven cavity flow}
The next test is a two-dimensional lid-driven cavity problem on the
domain $\Omega = [0, 1]\times [0,1]$ with $(u,v) = (1,0)$ on the top
and no-slip boundary conditions on the rest part of the boundary.
Figure~\ref{cavity:re100}(d) and Figure~\ref{cavity:re400}(d) show
the centerline velocities $u(y)\mbox{ and }v(x)$ along the vertical
and horizontal centerlines, respectively. Reynolds numbers of $Re =
100 \mbox{ and } 400$ are chosen for validating the current method.
The present result is in good agreement with that of Ghia \textit{et
al.}~\cite{ghia} who used 128$\times$128 uniformly distributed
rectangular cells. The contour lines for stream function, pressure,
and vorticity are shown in Fig~\ref{cavity:re100}(a)-(c) and
Fig~\ref{cavity:re400}(a)-(c).

\begin{figure}[H]
    \centering
    \subfigure[]{\includegraphics[width=0.45\textwidth]{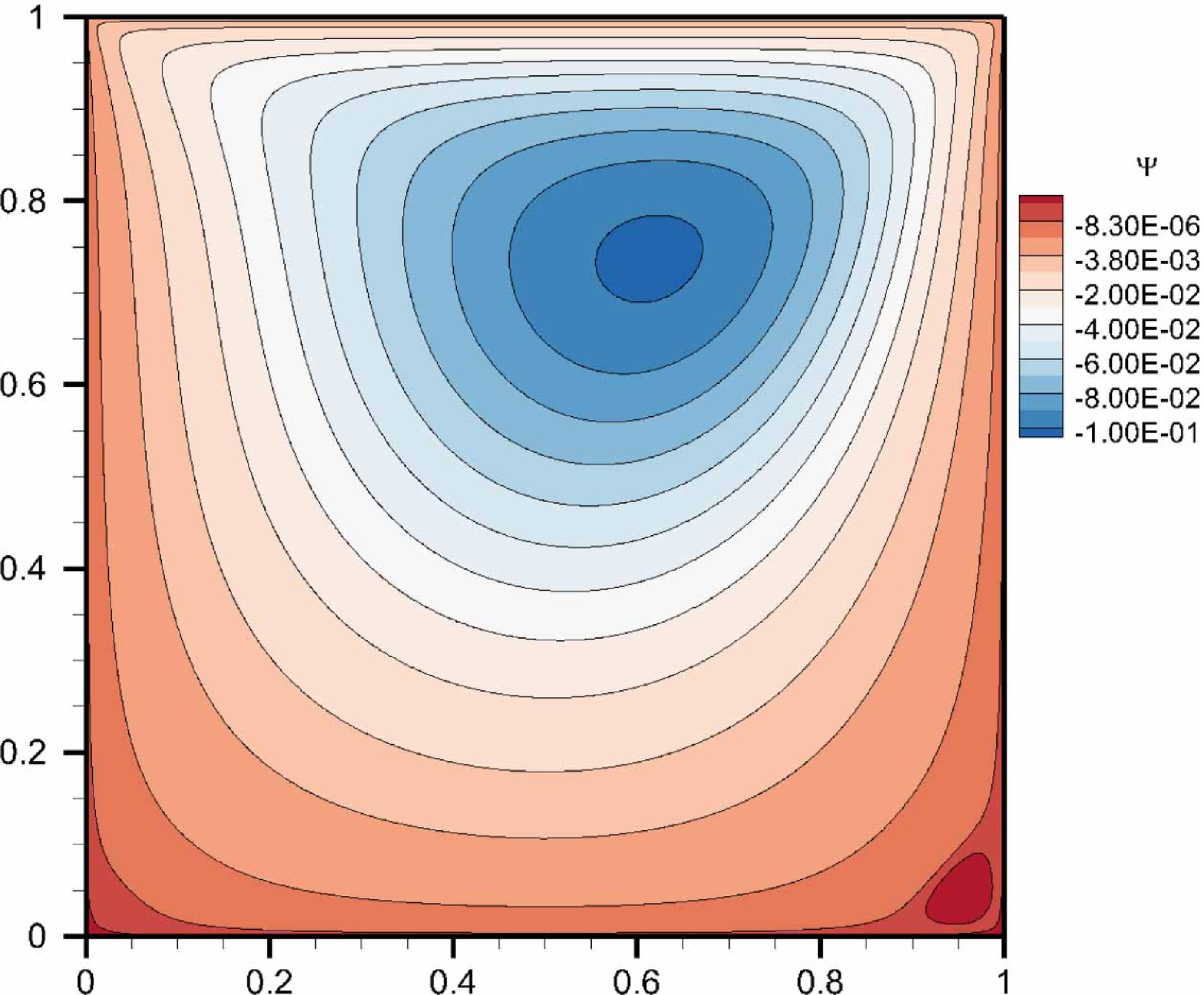}}
    \subfigure[]{\includegraphics[width=0.45\textwidth]{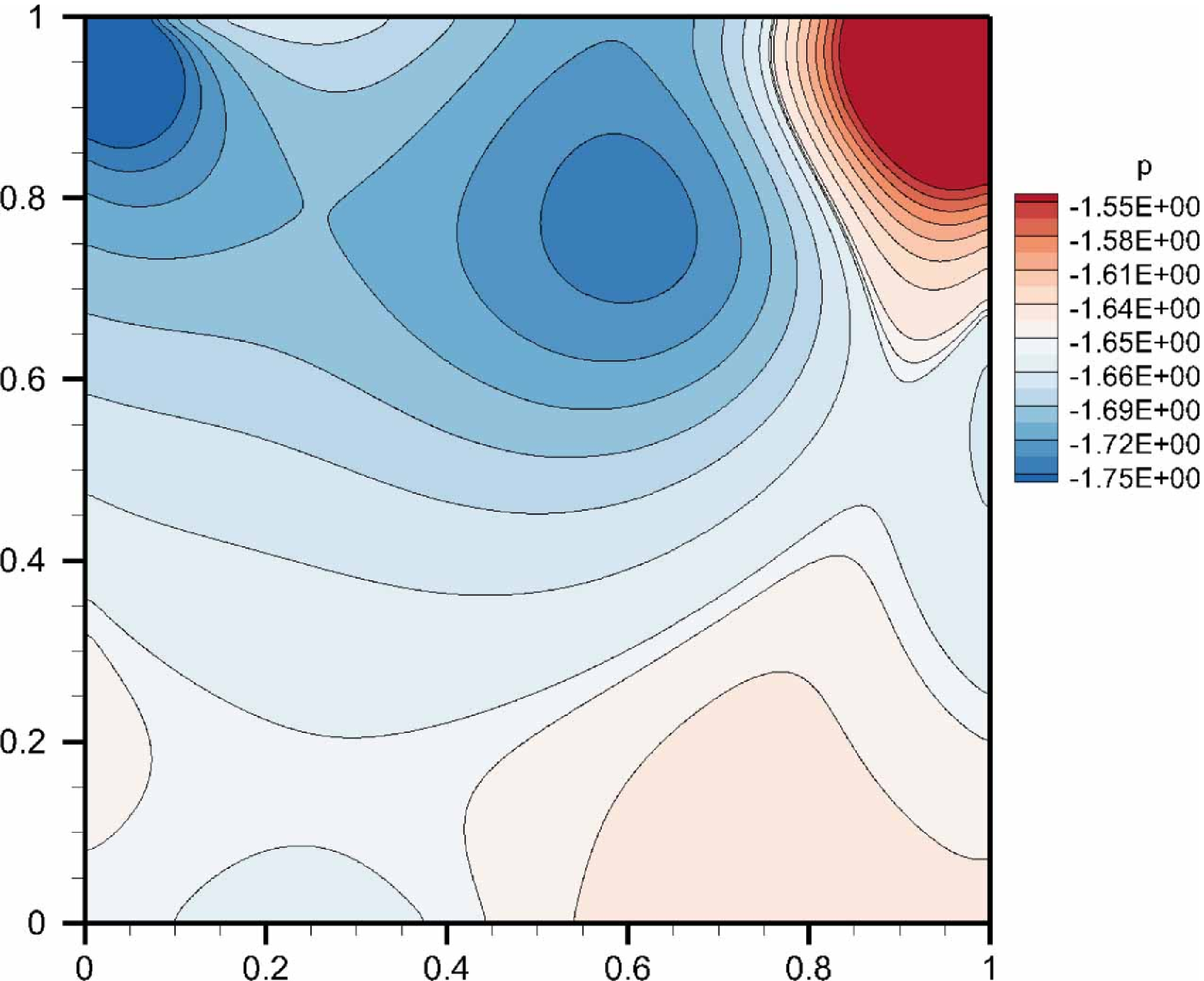}}
    \subfigure[]{\includegraphics[width=0.45\textwidth]{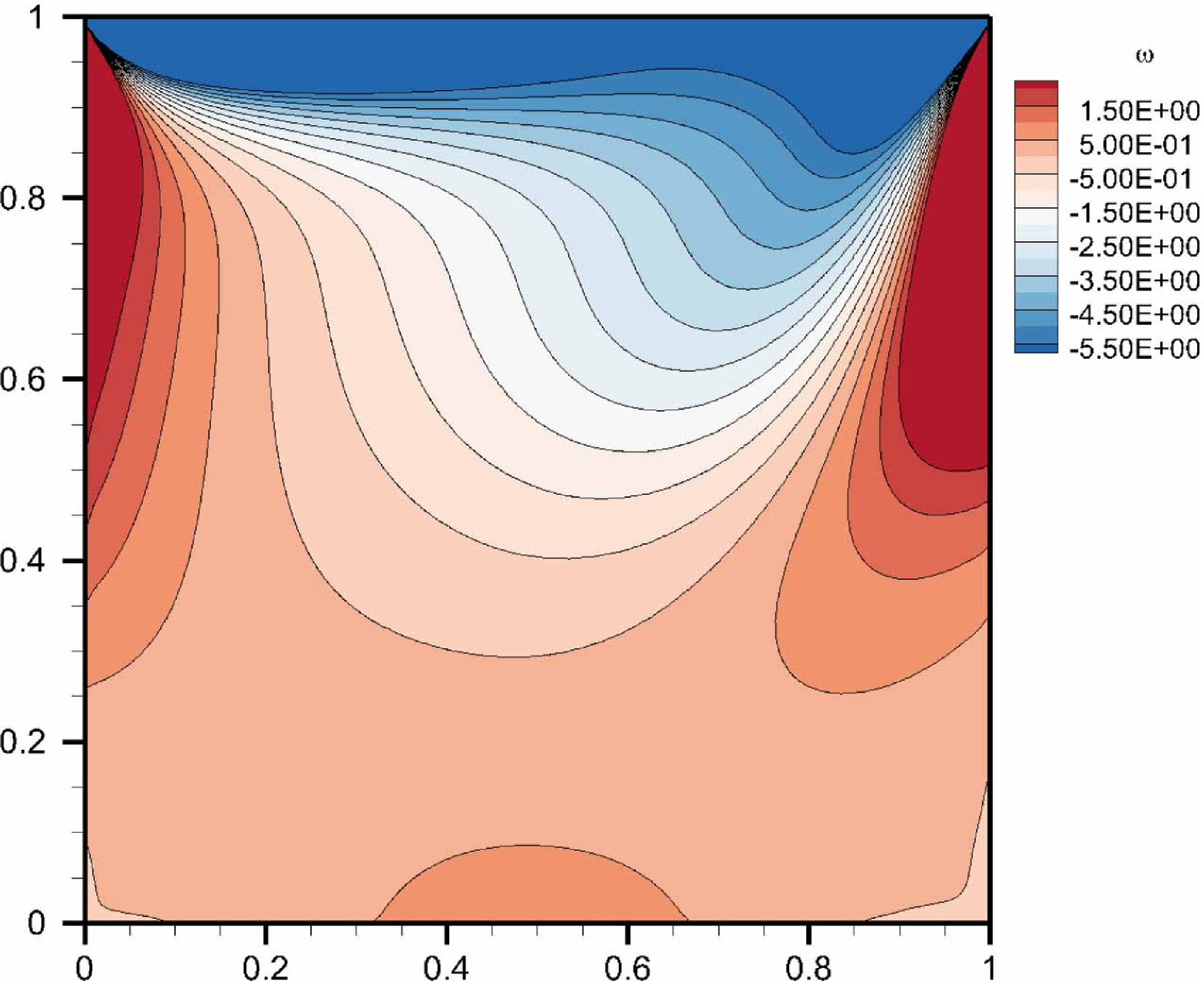}}
    \subfigure[]{\includegraphics[width=0.45\textwidth]{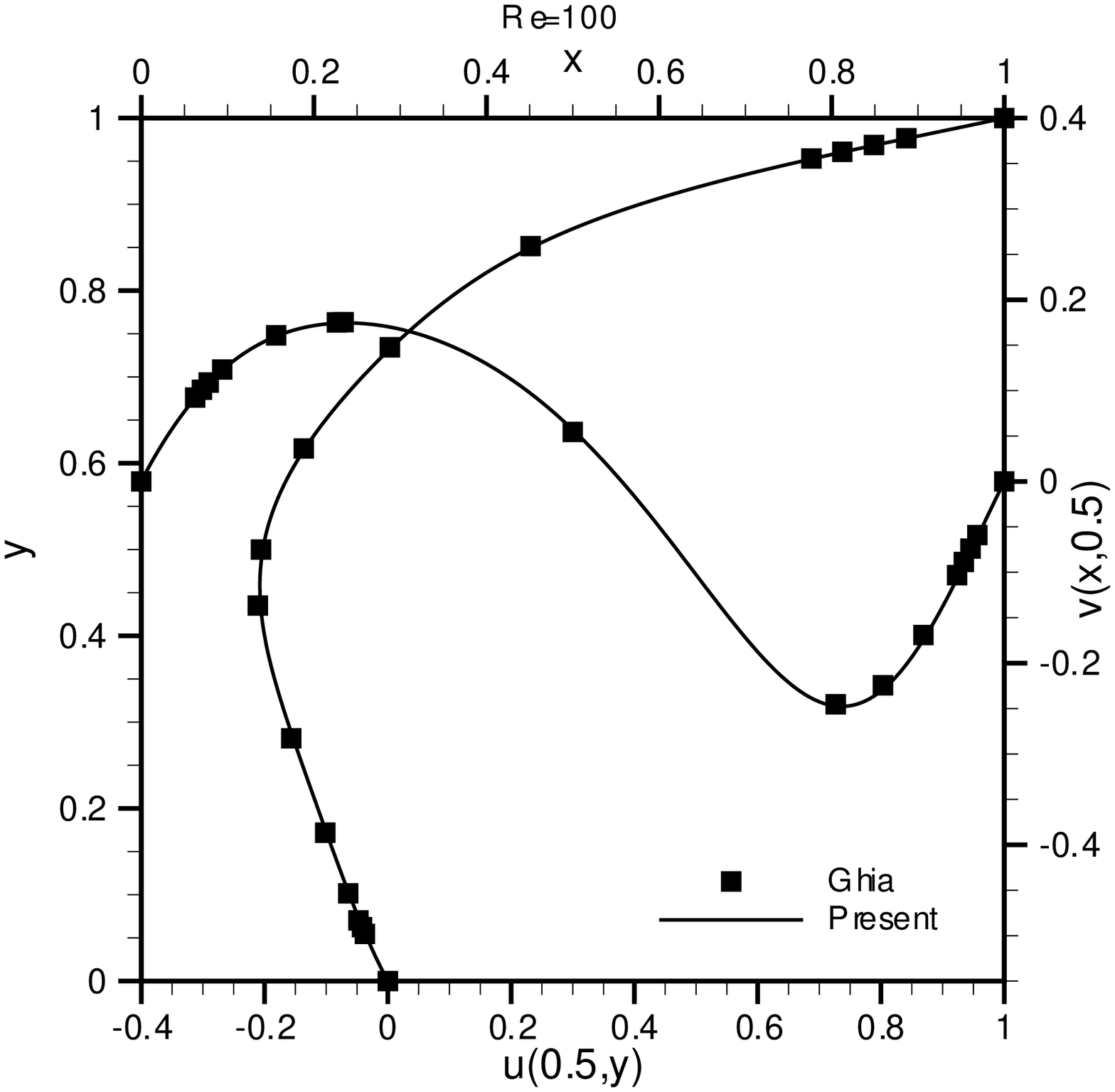}}
    \caption{Lid-driven cavity flow with $Re=100$ : (a) stream function; (b) pressure; (c) vorticity; (d) centerline velocities $u$ and $v$. Results from Ghia \textit{et
al.}~\cite{ghia} are compared with current numerical solutions.}
\label{cavity:re100}
\end{figure}

\begin{figure}[H]
    \centering
    \subfigure[]{\includegraphics[width=0.45\textwidth]{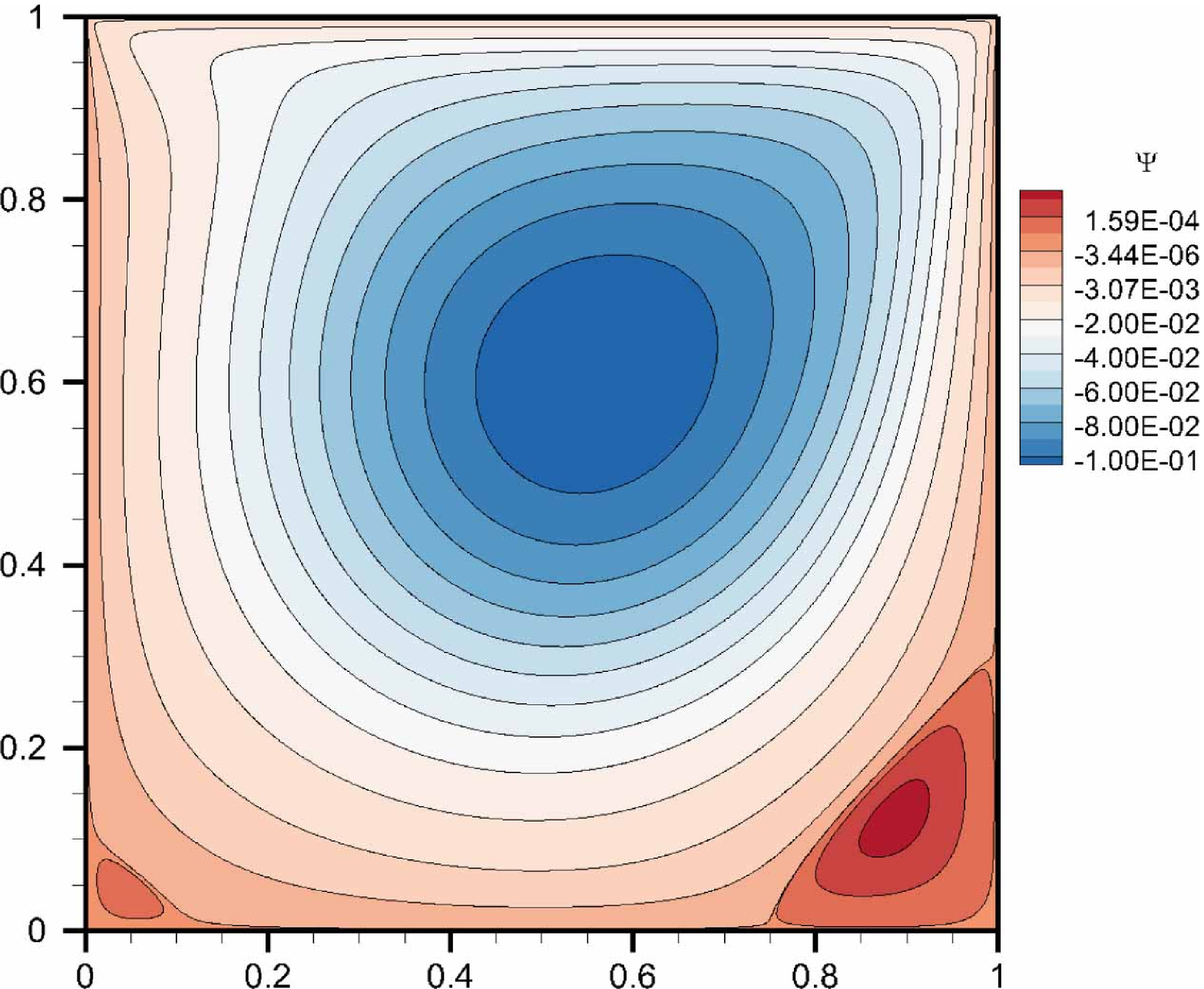}}
    \subfigure[]{\includegraphics[width=0.45\textwidth]{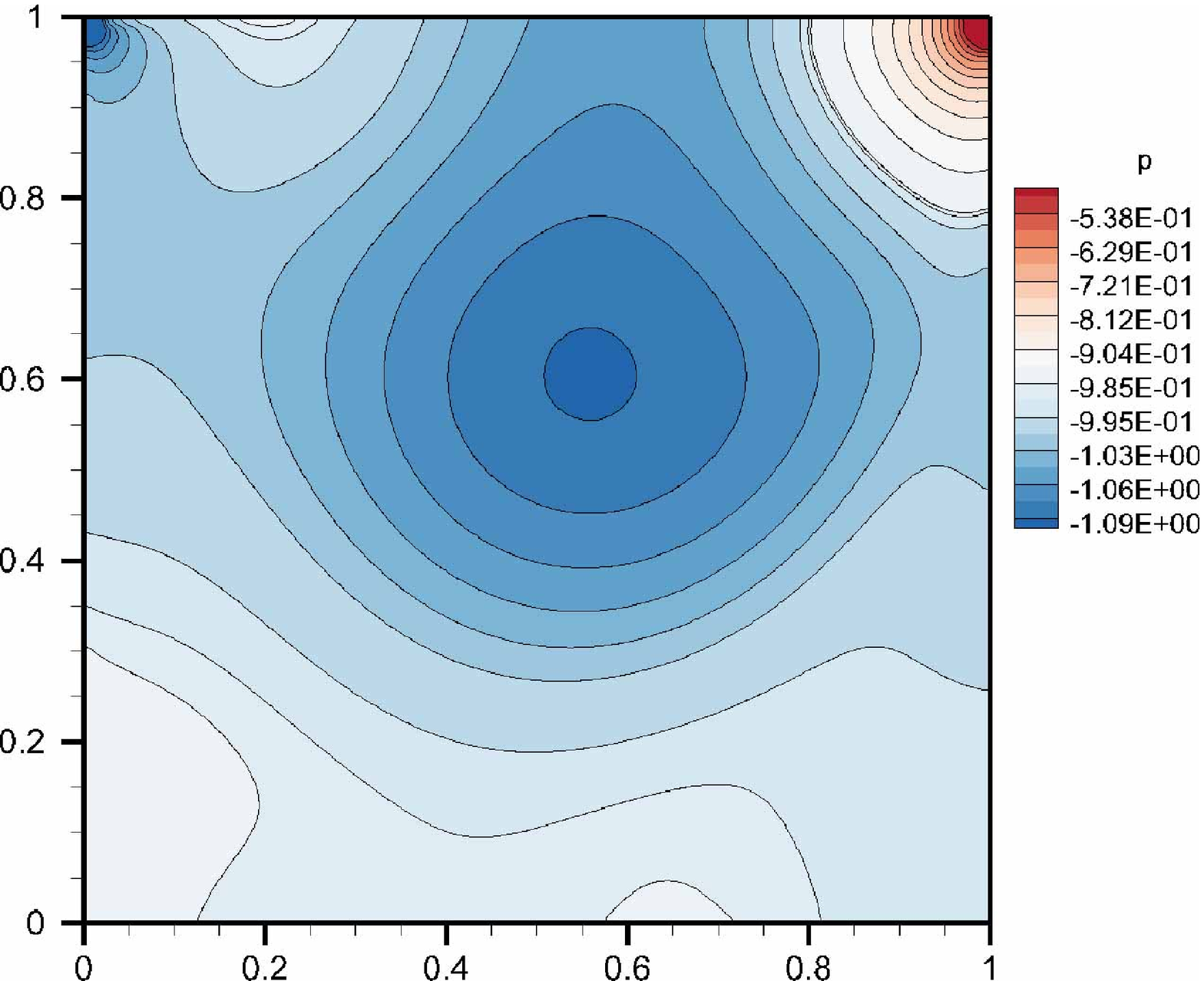}}
    \subfigure[]{\includegraphics[width=0.45\textwidth]{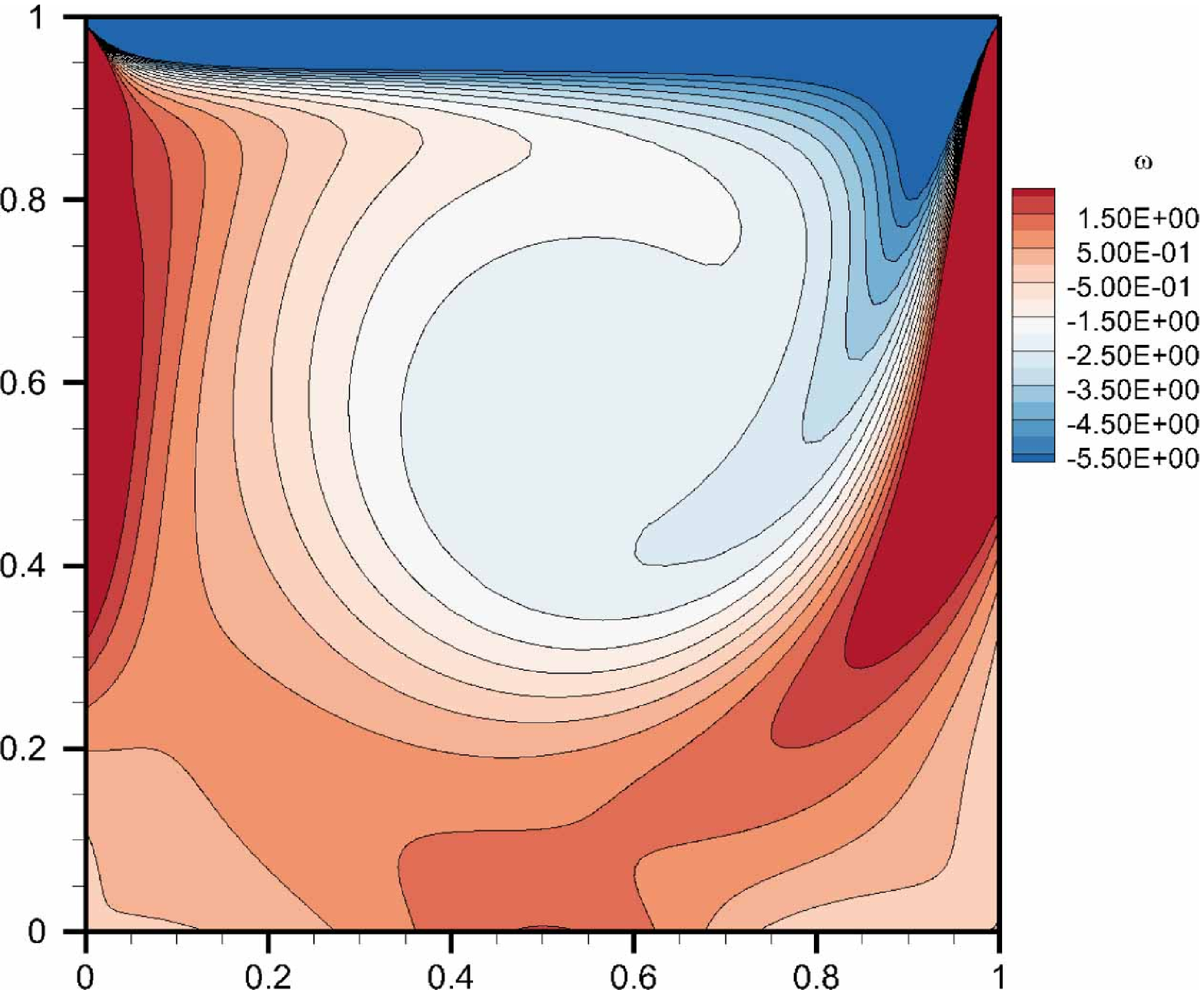}}
    \subfigure[]{\includegraphics[width=0.45\textwidth]{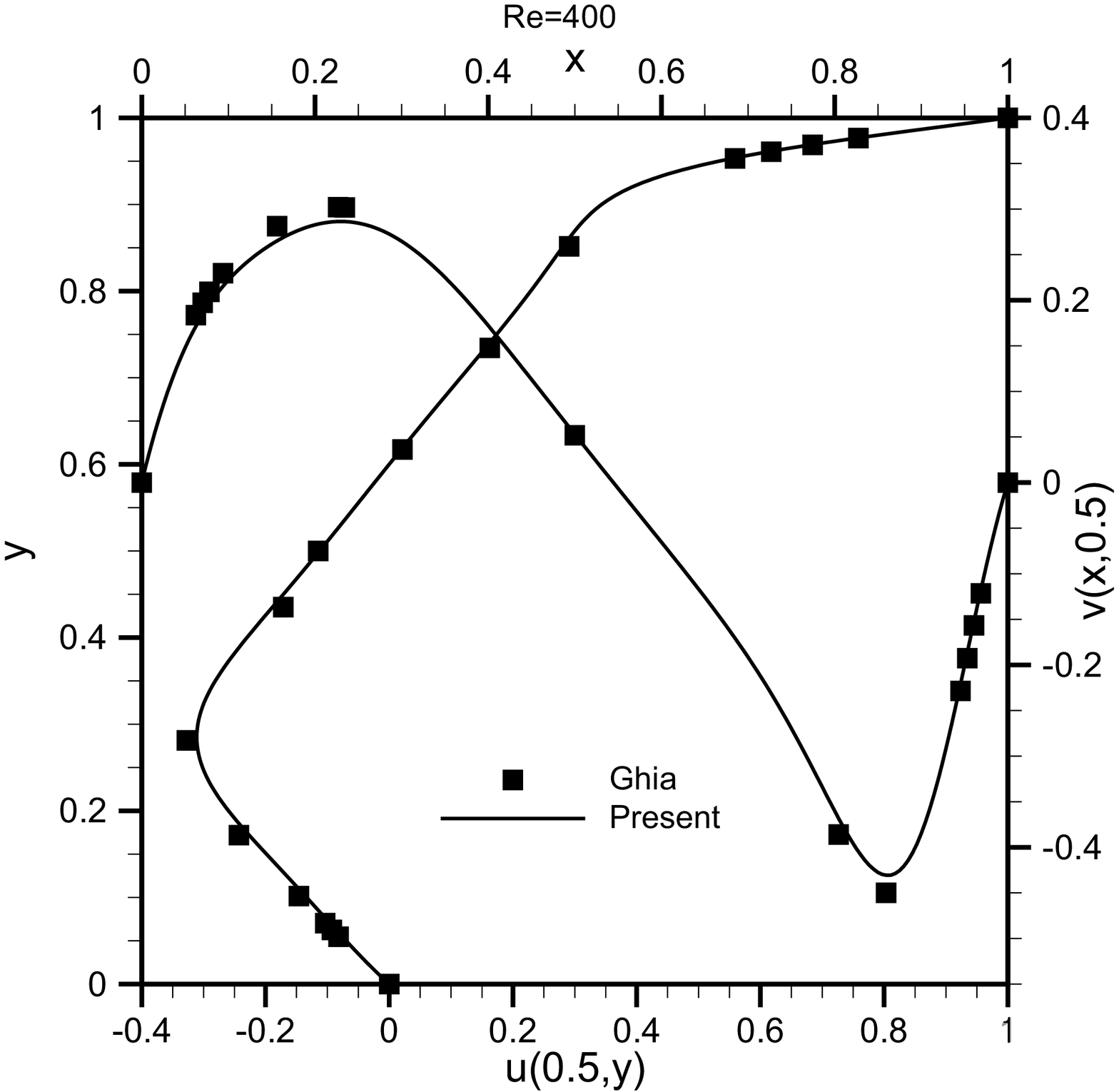}}
    \caption{Lid-driven cavity flow with $Re=400$ : (a) stream function; (b) pressure; (c) vorticity; (d) centerline velocities $u$ and $v$. Results from Ghia \textit{et
al.}~\cite{ghia} are compared with current numerical
solutions.}\label{cavity:re400}
\end{figure}

\subsection{Flow over a circular cylinder}
We consider flow over a circular cylinder as another test problem
because the dimensions of the recirculation zone and the force on
the cylinder at various Reynolds numbers are readily available from
previous experimental and numerical studies. Our two-dimensional
simulations are performed by introducing a cylinder of diameter d =
1 in a large computational domain D with initially uniform flow, $u
= u_{\infty} = 1$. Reynolds numbers of $Re = 10,\,20, \mbox{ and }
40$ are chosen for validating the current method at steady-state.
The resulting wake dimensions and drag coefficients are compared to
values reported in the
literatures~\cite{dennis, takami,fornberg,ding,kim}. In
Fig.~\ref{fig::circular_cp}, the vorticity and the pressure
coefficient $C_p$ on the body surface are plotted, while
Table~\ref{CYL_CD:Re102040} shows the drag coefficient($C_D$) for
each Reynolds number of 10, 20, and 40. The stream function and
vorticity contours around the body are also illustrated in Fig.
\ref{fig:circular_stream_vorticity}.

    \begin{table}
        \centering
        \begin{tabular}{lrrr}
            \hline
            $C_D$\hspace{2.5cm}         & $Re=10$                       & \hspace{0.5cm} $Re=20$    & \hspace{0.5cm}$Re=40$  \\
            \hline\hline
            Dennis et al.\cite{dennis}  & 2.85                              & 2.05                                  & 1.522 \\
            Takami et al.\cite{takami}  & 2.80                              & 2.01                                  & 1.536 \\
            Tuann et al.\cite{tuann}        & 3.18                              & 2.25                                  & 1.675 \\
            Fornberg\cite{fornberg}         &                                   & 2.00                                  & 1.498 \\
            H. Ding\cite{ding}              & 3.07                              & 2.18                                  & 1.713 \\
            Kim et al.\cite{kim}                &                                   &                                           & 1.51\phantom{0}  \\
            \\
            Present                             & 3.03                          & 2.17                                  & 1.536 \\
            \hline
        \end{tabular}
        \caption{Comparison of drag coefficient for steady
        flow.}\label{CYL_CD:Re102040}
    \end{table}

 \begin{figure}[H]
        \centering
        \subfigure[Wall pressure coefficient ($C_p$).]
        {
            \includegraphics[width=0.45\textwidth]{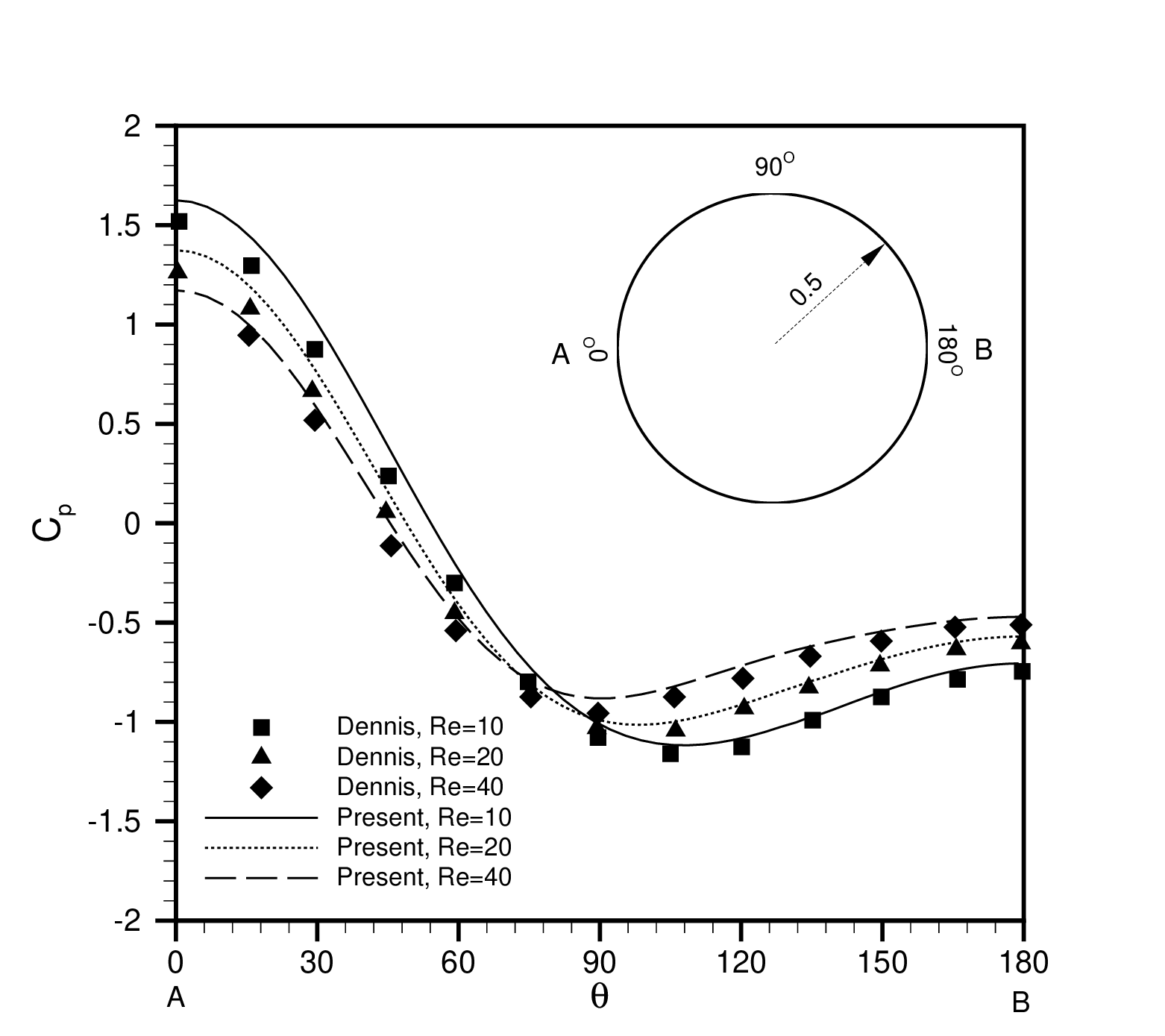}
        }
        \subfigure[Wall vorticity ($\omega$) for]
        {
            \includegraphics[width=0.45\textwidth]{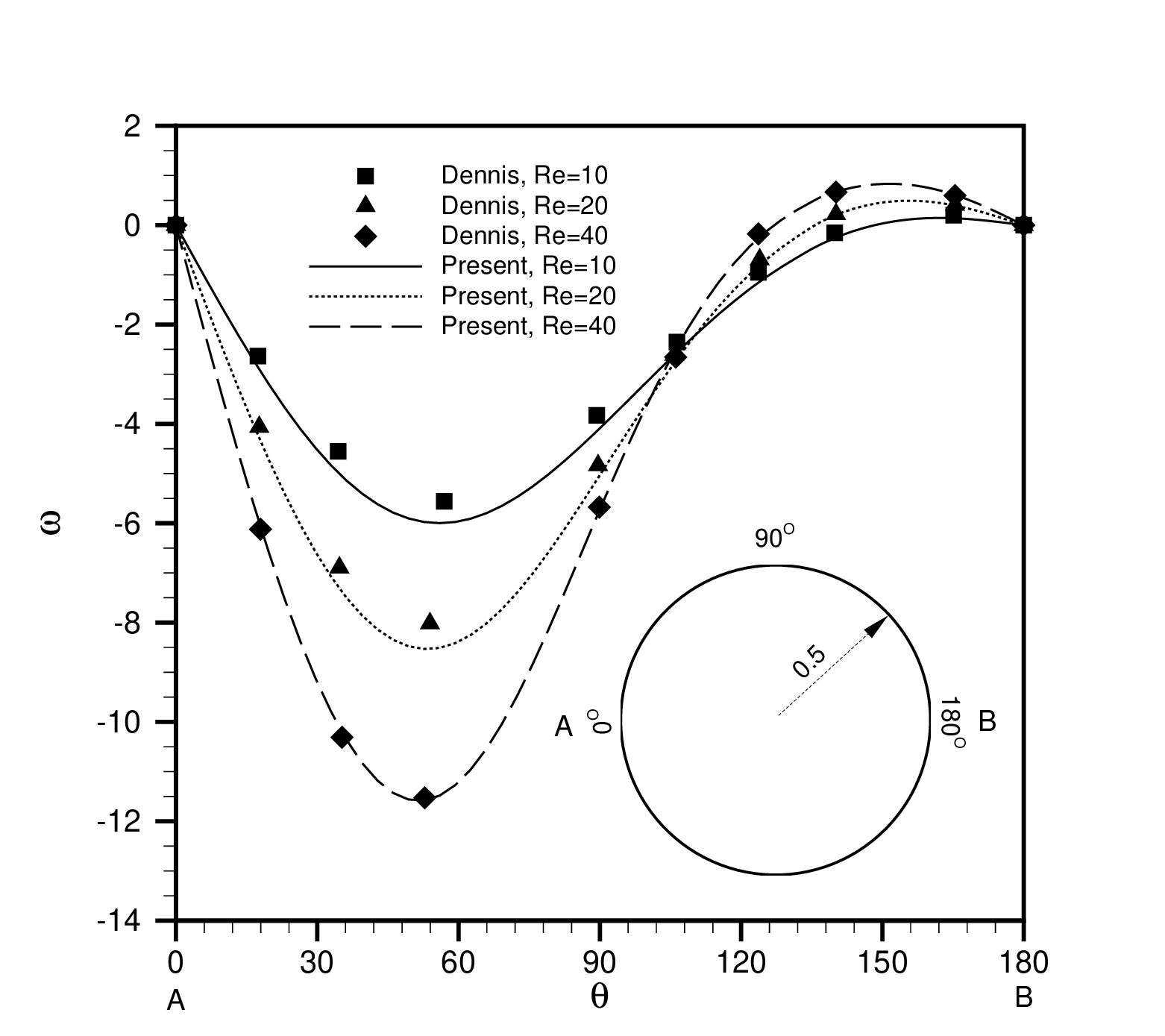}
        }
        \caption{Comparison of the vorticity and the pressure coefficients on the circular cylinder with $Re=10,20$ and $40$.}
        \label{fig::circular_cp}
    \end{figure}

   \begin{figure}[H]
        \centering
        \subfigure[Stream functions for $Re=10$]
        {
            \includegraphics[width=0.45\textwidth]{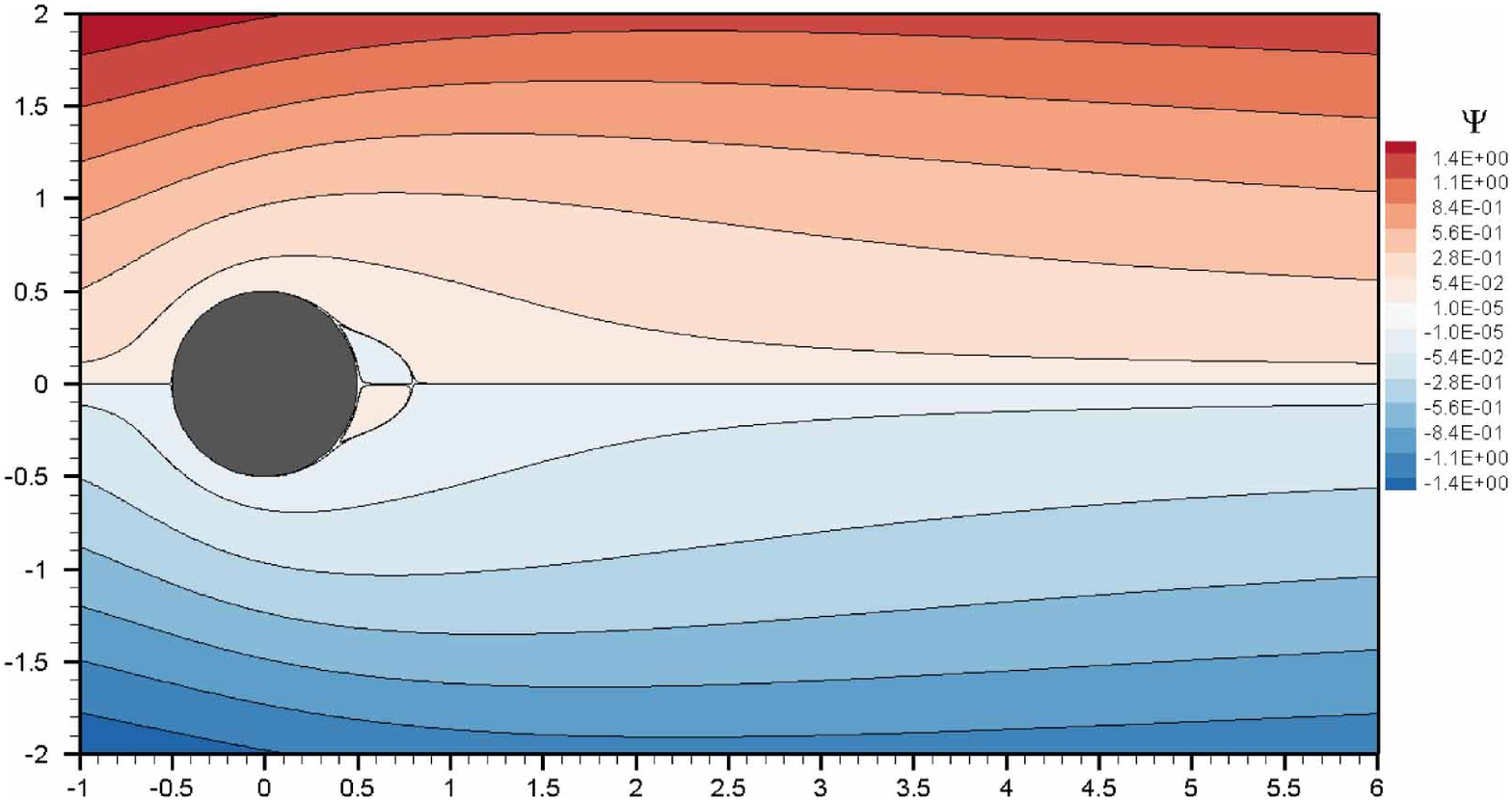}
        }
        \subfigure[Vorticity for $Re=10$]
        {
            \includegraphics[width=0.45\textwidth]{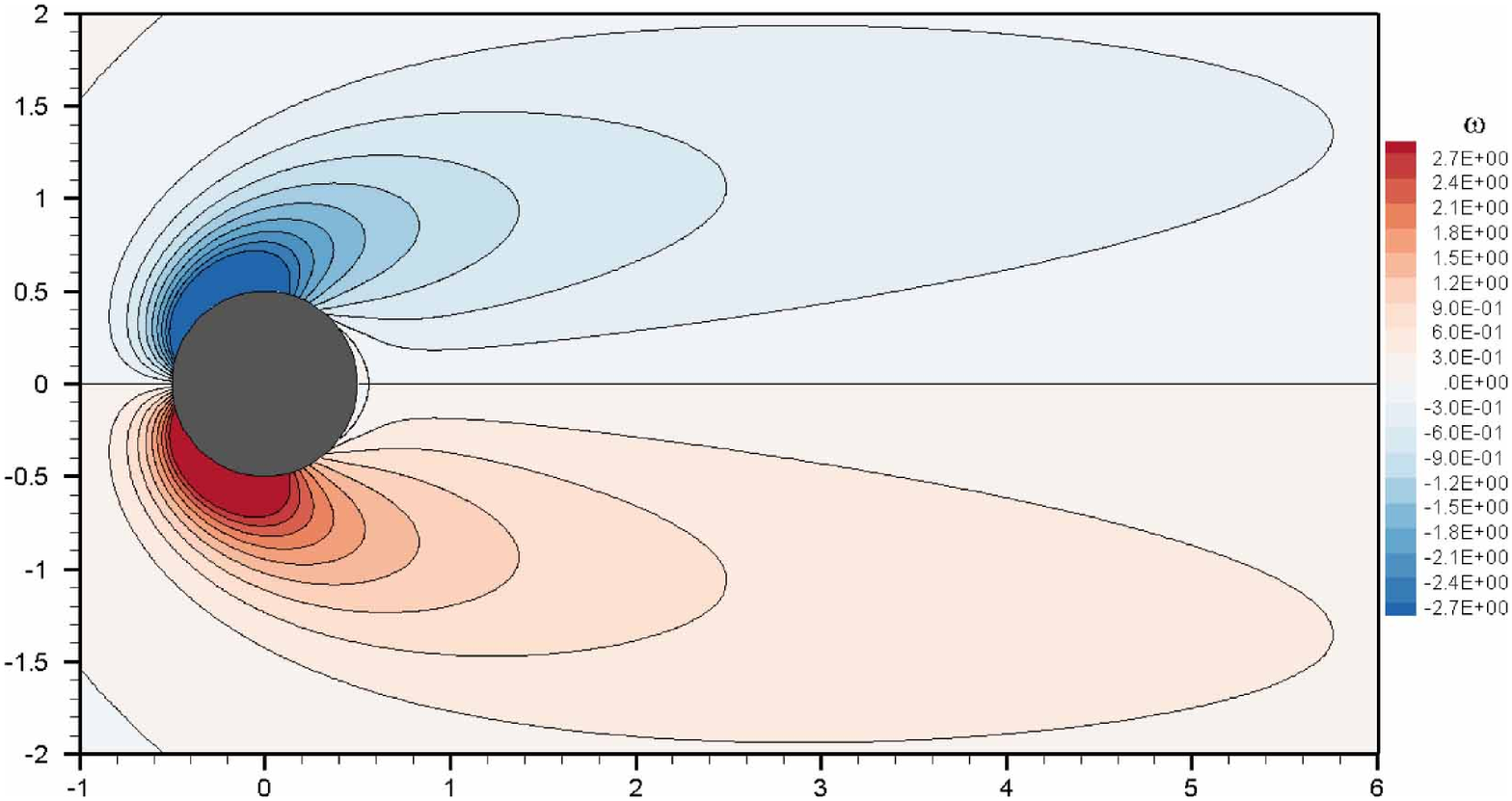}
        }
        \subfigure[Stream function for $Re=20$]
        {
            \includegraphics[width=0.45\textwidth]{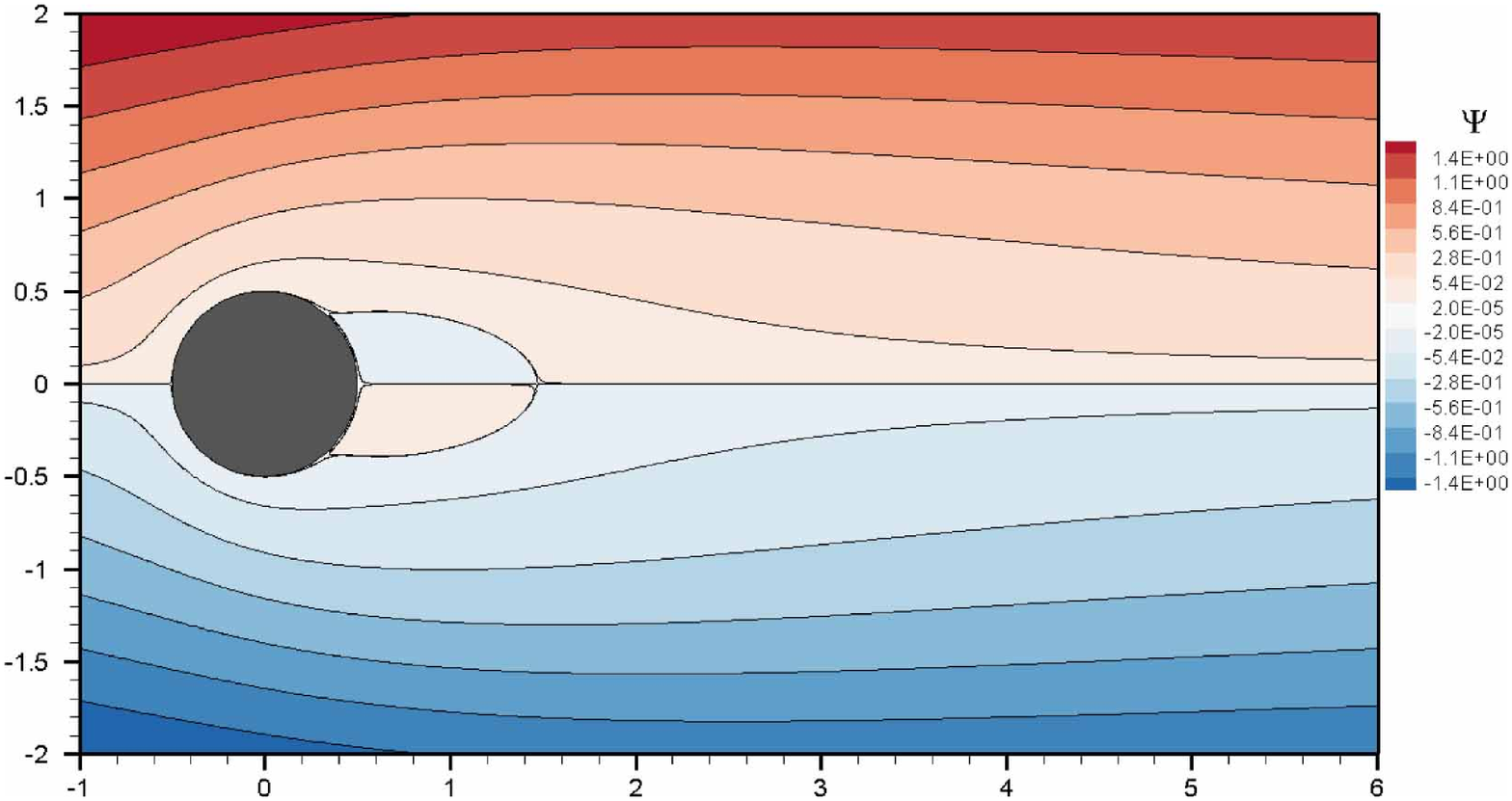}
        }
        \subfigure[Vorticity for $Re=20$]
        {
            \includegraphics[width=0.45\textwidth]{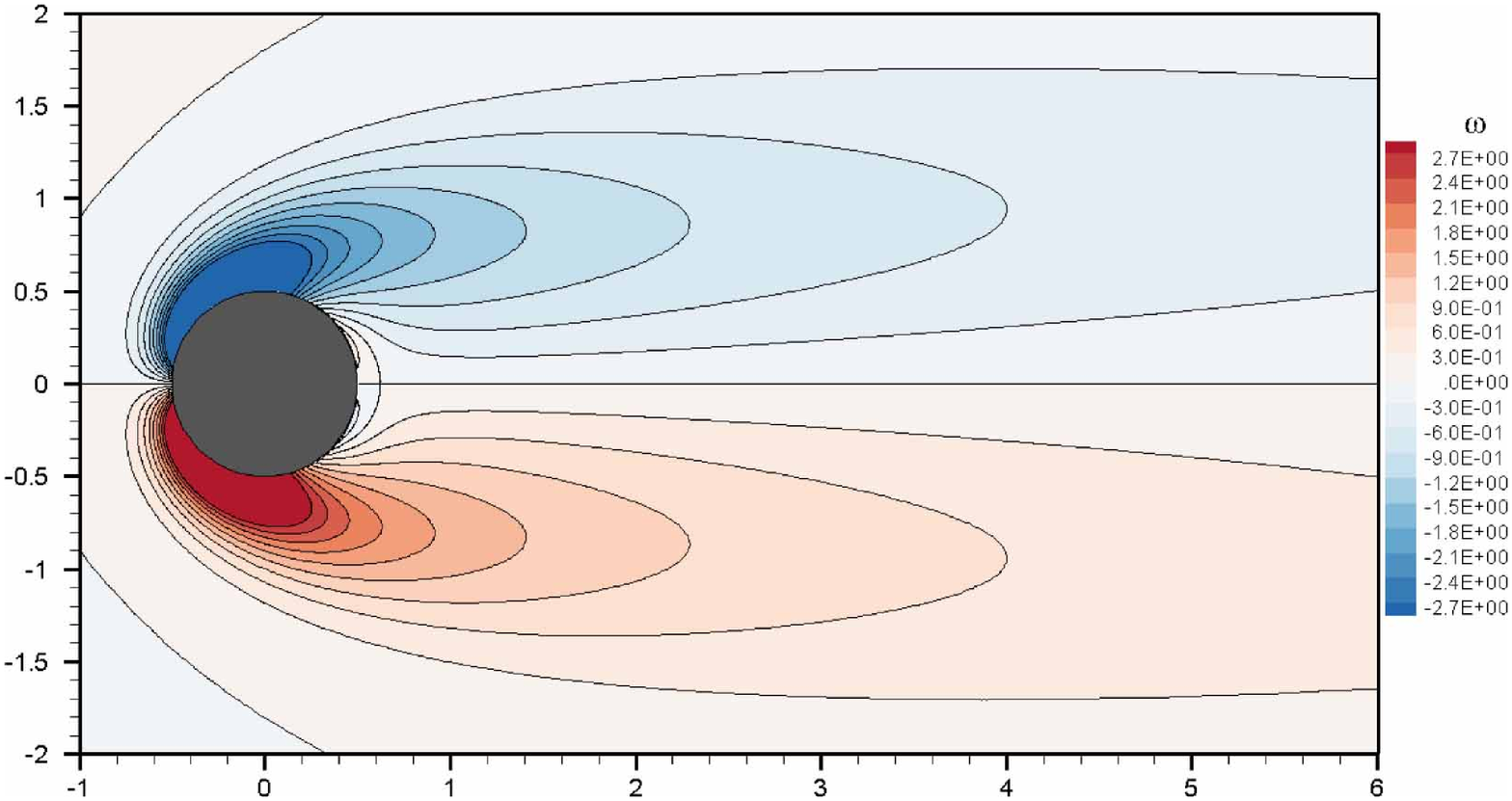}
        }
       \subfigure[Stream functions for $Re=40$]
        {
            \includegraphics[width=0.45\textwidth]{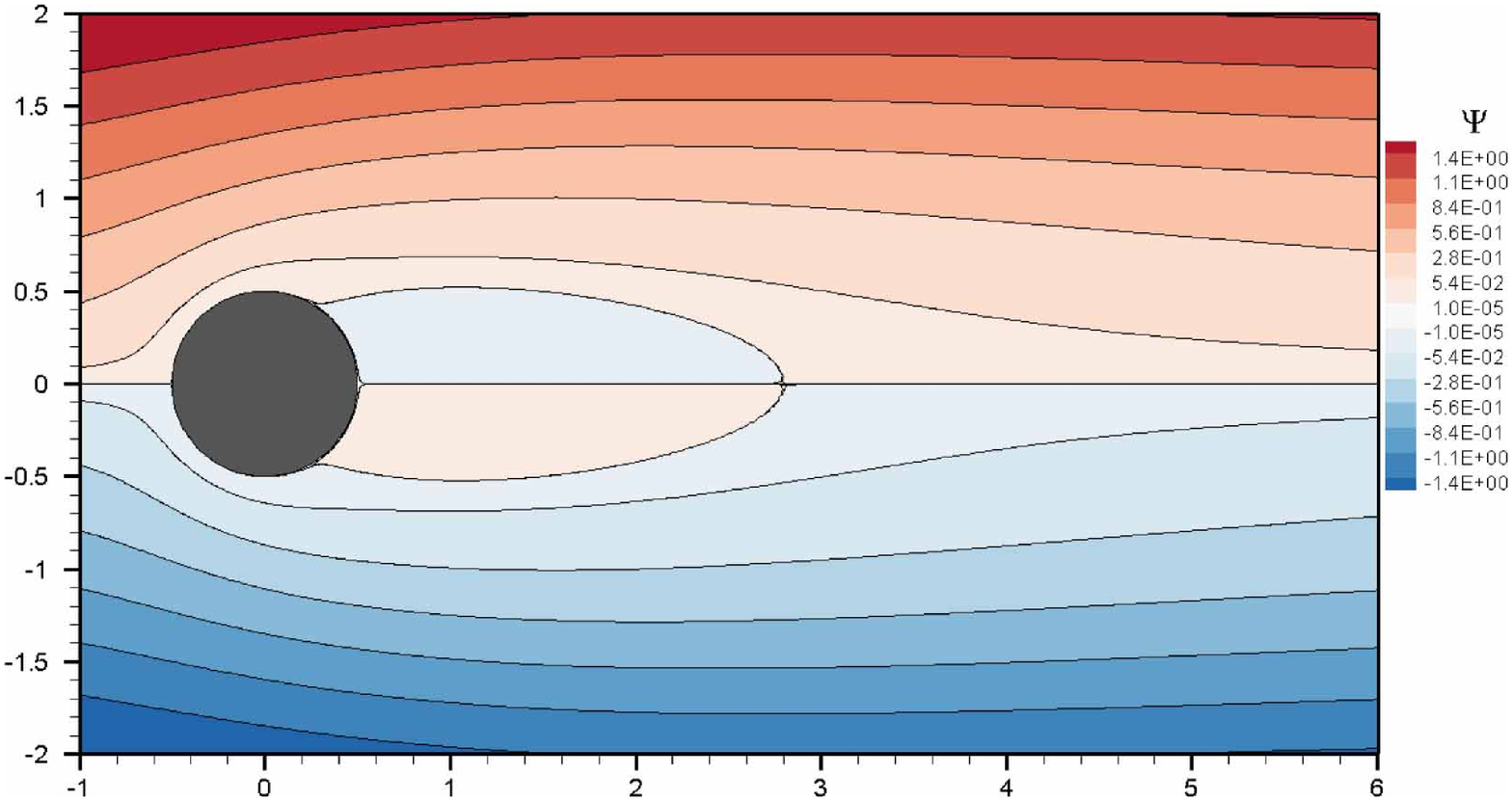}
        }
        \subfigure[Vorticity for $Re=40$]
        {
            \includegraphics[width=0.45\textwidth]{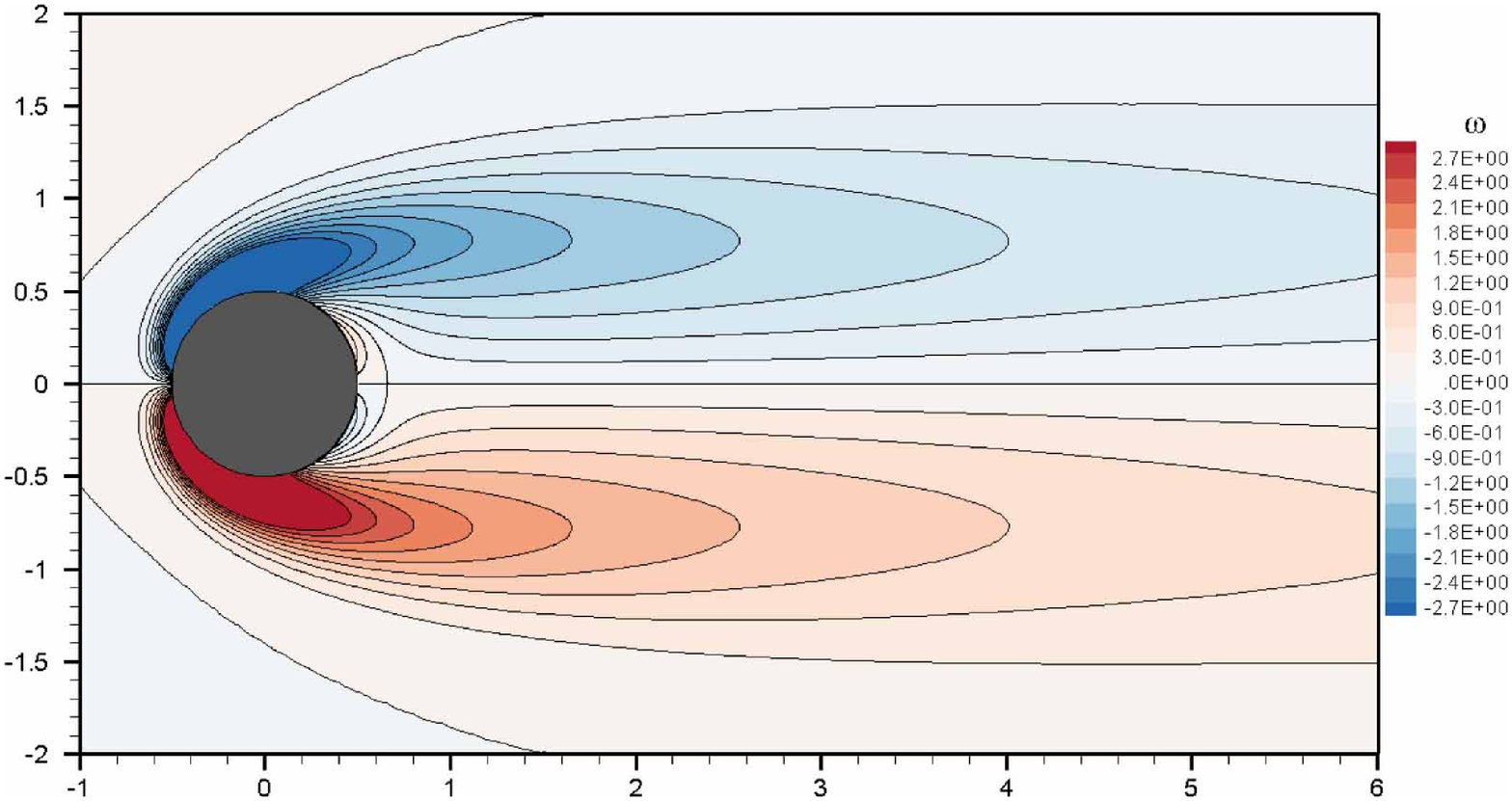}
        }
        \caption{Stream function and vorticity of flow over a circular cylinder with $Re=10, 20$, and $40$.}
        \label{fig:circular_stream_vorticity}
   \end{figure}

\bibliographystyle{siam}
\bibliography{ref}

\begin{thebibliography}{10}

\bibitem{babuska1973}
{\sc Ivo Babu\v{s}ka}, {\em {The finite element method with Lagrangian
  multipliers}}, Numerische Mathematik, 20 (1973), pp.~179--192.

\bibitem{brezzi1974}
{\sc F~Brezzi}, {\em {On the existence, uniqueness and approximation of
  saddle-point problems arising from lagrangian multipliers}}, ESAIM:
  Mathematical Modelling and Numerical Analysis - Mod\'{e}lisation
  Math\'{e}matique et Analyse Num\'{e}rique, 8 (1974), pp.~129--151.

\bibitem{choe2006b}
{\sc Hi~Jun Choe, Mu-Young Ahn, Ki~Dong Song, Kyong-Yop Park, and Seong-Kwan
  Park}, {\em {Flow Field Computation for Simplified High Voltage Gas Circuit
  Breaker Model by Upwind Meshfree Method}}, Japanese Journal of Applied
  Physics, 45 (2006), pp.~9247--9253.

\bibitem{choe2006a}
{\sc Hi~Jun Choe, Yongsik Kim, and Do~Wan Kim}, {\em {Meshfree method for the
  non-stationary incompressible Navier-Stokes equations}}, Discrete and
  Continuous Dynamical Systems - Series B, 6 (2006), pp.~17--39.

\bibitem{dennis}
{\sc S.~C.~R. Dennis and Gau-Zu Chang}, {\em {Numerical solutions for steady
  flow past a circular cylinder at Reynolds numbers up to 100}}, J. Fluid
  Mech., 42 (2006), p.~471.

\bibitem{ding}
{\sc H.~Ding, C.~Shu, K.S. Yeo, and D.~Xu}, {\em {Simulation of incompressible
  viscous flows past a circular cylinder by hybrid FD scheme and meshless least
  square-based finite difference method}}, Comput. Methods Appl. Mech. Eng.,
  193 (2004), pp.~727--744.

\bibitem{fornberg}
{\sc Bengt Fornberg}, {\em {A numerical study of steady viscous flow past a
  circular cylinder}}, Journal of Fluid Mechanics, 98 (2006), p.~819.

\bibitem{ghia}
{\sc U~Ghia, K.N Ghia, and C.T Shin}, {\em {High-Re solutions for
  incompressible flow using the Navier-Stokes equations and a multigrid
  method}}, J. Comput. Phys., 48 (1982), pp.~387--411.

\bibitem{girault1986}
{\sc Vivette Girault and Pierre-Arnaud Raviart}, {\em {Finite Element Methods
  for Navier-Stokes Equations: Theory and Algorithms. Springer Series in
  Computational Mathematics (Book 5)}}, Springer-Verlag, Berlin, 1986.

\bibitem{kim}
{\sc Jungwoo Kim, Dongjoo Kim, and Haecheon Choi}, {\em {An Immersed-Boundary
  Finite-Volume Method for Simulations of Flow in Complex Geometries}}, J.
  Comput. Phys., 171 (2001), pp.~132--150.

\bibitem{le2013}
{\sc Anh~Ha Le and Pascal Omnes}, {\em {Discrete Poincar\'{e} inequalities for
  arbitrary meshes in the discrete duality finite volume context}}, Electron.
  Trans. Numer. Anal., 40 (2013), pp.~94--119.

\bibitem{liu1996}
{\sc Wing~Kam Liu, Yijung Chen, R.Aziz Uras, and Chin~Tang Chang}, {\em
  {Generalized multiple scale reproducing kernel particle methods}}, Comput.
  Methods Appl. Mech. Eng., 139 (1996), pp.~91--157.

\bibitem{liu1997}
{\sc Wing-Kam Liu, Shaofan Li, and Ted Belytschko}, {\em {Moving least-square
  reproducing kernel methods (I) Methodology and convergence}}, Comput. Methods
  Appl. Mech. Eng., 143 (1997), pp.~113--154.

\bibitem{park2010}
{\sc Seong-Kwan Park}, {\em {Flow field computation of compressible Euler
  equations in time varying domain}}, ph.d thesis, Yonsei University, Seoul,
  Korea, 2010.

\bibitem{park2007}
{\sc Seong-Kwan Park, Kyong-Yop Park, and Hi~Jun Choe}, {\em {Flow field
  computation for the high voltage gas blast circuit breaker with the moving
  boundary}}, Computer Physics Communications, 177 (2007), pp.~729--737.

\bibitem{shin2009}
{\sc Dongho Shin and John~C Strikwerda}, {\em {Inf-sup conditions for
  finite-difference approximations of the stokes equations}}, J. Aust. Math.
  Soc. Ser. B. Appl. Math., 39 (2009), p.~121.

\bibitem{takami}
{\sc Hideo Takami}, {\em {Steady Two-Dimensional Viscous Flow of an
  Incompressible Fluid past a Circular Cylinder}}, Phys. Fluids, 12 (1969),
  pp.~II--51.

\bibitem{tuann}
{\sc Shih-yu Tuann and Mervyn~D. Olson}, {\em {Numerical studies of the flow
  around a circular cylinder by a finite element method}}, Comput. Fluids, 6
  (1978), pp.~219--240.

\end{thebibliography}

\end{document}